\def\R{{\mathbb R}}
\def\<{\langle}
\def\>{\rangle}
\def\P{\mathbb P}
\def\E{\mathbb E}
\def\0{\underline 0}
\def\1{\underline 1}
\newcommand{\bel}{\begin{equation}\label}
\newcommand{\ee}{\end{equation}}
\newtheorem{theorem}{Theorem}[section]
\newtheorem{proposition}[theorem]{Proposition}
\newtheorem{corollary}[theorem]{Corollary}
\newtheorem{lemma}[theorem]{Lemma}
\newtheorem{remark}{Remark}[section]
\theoremstyle{definition}
\begin{document}
\title[]{Order statistics from overlapping samples: \\
bivariate densities and regression properties }
\author{F. L\'opez-Bl\'azquez$^1$, Nan-Cheng Su$^2$ and Jacek Weso\l owski$^3$}
\address[]{\footnotesize \hspace{-0.7cm}$^1$Departamento Estadistica e Investigati\'on Operativa, Universidad de Sevilla, Spain\newline
                         \hspace*{1.4cm}$^2$Department of Statistics, National Taipei University, Taiwan\newline
                         $^3$Faculty of Mathematics and Information Science, Warsaw University of Technology, Poland
                            }
\email[Corresponding author]{$^2$Corresponding author; sunanchen@gmail.com; Tel:886-2-86741111 ext
66778}
\date{\today }

\begin{abstract} In this paper we are interested in the joint distribution of two order statistics from overlapping samples. We give an explicit formula for the distribution of such a pair of random variables under the assumption that the parent distribution is absolutely continuous (with respect to the Lebesgue measure on the real line). The distribution is identified through the form of the density with respect to a measure which is a sum of the bivariate Lebesgue measure on $\R^2$ and the univariate Lebesgue measure on the diagonal $\{(x,x):\,x\in\R\}$.
	
We are also interested in the question to what extent conditional expectation of one of such order statistic given another determines the parent distribution. In particular, we provide a new characterization by linearity of regression of an order statistic from the extended sample given the one from the original sample, special case of which solves a problem explicitly stated in the literature. It appears that to describe the correct parent distribution  it is convenient to use quantile density functions. In several other cases of regressions of order statistics we provide new results regarding uniqueness of the distribution in the sample. Nevertheless the general question of identifiability of the parent distribution by regression of order statistics from overlapping samples remains open.

\bigskip

\noindent \textbf{Keywords:} characterization; order statistics; overlapping samples; linearity of regression; uniqueness theorem.

\noindent MSC 2010\textit{\ Subject Classifications}: Primary: 60E05;
secondary: 62E10
\end{abstract}

\maketitle


\section{Introduction}\label{intro}

\label{sec.1}
Properties of order statistics (os's) $X_{1:n} \leq X_{2:n} \leq \cdots \leq X_{n:n}$  based on the sample $\{X_1,X_2,\cdots,X_n\}$ of independent and identically distributed (iid) random variables with absolutely continuous distribution are widely known, see e.g. the monographs David and Nagaraja (2003) or Arnold, Balakrishnan and Nagaraja (2008) for excellent reviews. Much less is known for os's which arise from different samples which have common elements. There are two special cases which until now have been studied in the literature: (1) moving os's, when the subsequent  samples are of the same size and have the same size of the overlap - see e.g. Inagaki (1980), David and Rogers (1983), Ishida and Kvedaras (2015) or Balakrishnan and Tan (2016); (let us mention that moving samples have a long history in quality control and time series analysis - in particular,
the moving median is a simple robust estimator of location and the moving range is a current measure of dispersion complementing the moving average); (2) special cases of os's from the original and extended sample which except the original sample contains a number of additional observations - see e.g. Siddiqui (1970), Tryfos and Blackmore (1985), Ahsanullah and Nevzorov (2000) or L\'{o}pez-Bl\'azquez and Salamanca-Mi\~no (2014).

In the latter paper the authors introduced a reference measure $\nu$ with respect to which the joint distribution of $(X_{n-k+1:n},\,X_{n-k+2:n+1})$ (which they were interested in) has a density. This measure, $\nu $,  defined by
\begin{equation}\label{mesnu}
\nu (B)=\mu _{2}(B)+\mu _{1}(\pi (B)),\qquad B\in \mathcal{B}({\mathbb{R}}^{2}),
\end{equation}
where  $\pi (B)=\{x\in {\mathbb{R}}:\,(x,x)\in B\}$ and $\mu _{i}$ is the
Lebesgue measure in ${\mathbb{R}}^{i}$, $i=1,2$, will be of special interest for us here since $\nu$ will serve as the reference measure for bivariate densities of os's from overlapping samples.


In this paper we consider iid random variables $X_1,X_2,\ldots$ with  the cumulative distribution function (cdf)  denoted by $F$, its tail denoted by $\bar{F}:=1-F$ and the density with respect to $\mu_1$ denoted by $f$.

Let $\emptyset\ne A\subset \{1,2,\ldots\}$ be such that $n_A:=|A|<\infty$, where $|A|$ denotes the number of elements in $A$. By $X_{i:A}$ denote the $i$th os from the sample $\{X_k,\,k\in A\}$, $i=1,\ldots,n_A$.
In  case $A=\{1,\ldots,n\}$ we have $X_{i:A}=X_{i:n}$, $i=1,\ldots,n$. Consider additionally $\emptyset\ne B\subset \{1,2,\ldots\}$ such that $n_B:=|B|<\infty$. Our aim is to study the joint distribution of $(X_{i:A},\,X_{j:B})$, $i=1,\ldots,n_A$, $j=1,\ldots,n_B$. Of course, when $A\cap B=\emptyset$ the samples $\{X_k,\,k\in A\}$ and $\{X_k,\,k\in B\}$ are independent and the joint distribution of $(X_{i:A},\,X_{j:B})$ is just a product of marginal distributions of $X_{i:A}$ and $X_{j:B}$. We will only consider the case when $A\cap B\neq \emptyset$. Due to the permutation invariance of the distribution of $(X_1,X_2,\ldots)$ it suffices to take $A=\{1,\ldots,m\}$ and $B=\{r+1,r+2,\ldots,r+n\}$ with $r<m\le n$. Then we denote $X_{j:n}^{(r)}:=X_{j:B}$. In Section \ref{jointSec} we will study the joint density (with respect to the reference measure $\nu$) of the pair $(X_{i:m},\,X_{j:n}^{(r)})$. The case $r=0$ is technically much simpler but the main idea of the approach is the same as in the general case. Therefore we first derive the joint distribution of $(X_{i:m},\,X_{j:n})$ in Subsection \ref{simply} while the general case of an arbitrary $r\ge 0$ is considered in Subsection \ref{genfor} (with some technicalities moved to Appendix).

In Section \ref{Sec3} we are  interested in regressions $\E(X_{i:m}|X_{j:n}^{(r)})$, $\E(X_{j:n}^{(r)}|X_{i:m})$ and related characterizations or identifiability questions. The main tools are representations of these regressions in terms of combinations of $\E(X_{k:n+r}|X_{\ell:n+r})$, $k,l\in\{1,\ldots,n+r\}$. Since for $r>0$ such representations are rather complex,  our considerations in this case will be restricted to the simplest cases of regressions of $X_{1:2}$, $X_{2:2}$ given $X_{1:2}^{(1)}$ or given $X_{2:2}^{(1)}$. They are studied in Subsection \ref{mnr}.

The case of $r=0$ is much more tractable, though since $m<n$ the analysis of each of two dual regressions  $\E(X_{i:m}|X_{j:n})$ and $\E(X_{j:n}|X_{i:m})$ is quite different. In  particular, Do\l egowski and Weso\l owski (2015) (DW in the sequel) proved that
\begin{equation}\label{expe3}
\mathbb{P}(X_{i:m}=X_{k:n})=
\tfrac{\binom{k-1}{i-1}\binom{n-k}{m-i}}{\binom{n}{m}}I_{\{i,\ldots ,n-m+i\}}(k),
\end{equation}
and, consequently, obtained the following representation
\begin{equation}
\mathbb{E}(X_{i:m}|X_{j:n})=\sum_{k=i}^{n-m+i}\,\tfrac{\binom{k-1}{i-1}
\binom{n-k}{m-i}}{\binom{n}{m}}\,\mathbb{E}(X_{k:n}|X_{j:n}).
\label{expe1}
\end{equation}
Here and everywhere below equations involving conditional expectations are understood in the $\P$-almost sure sense.

It was proved in DW with the help of \eqref{expe1}  that the condition
\begin{equation}
\mathbb{E}(X_{i:m}|X_{j:n})=aX_{j:n}+b, \label{eq03}
\end{equation}
characterizes the parent distributions (exponential, Pareto and power) when $j\leq i$ and $j\geq n-m+i$. The case of $i=j$ had been considered earlier in Ahsanullah and Nevzorov (2000) even for an arbitrary shape of the regression function. The characterization through condition \eqref{eq03} given in DW is a direct generalization of characterizations by linearity of $\E(X_{i:n}|X_{j:n})$. Analysis of such problems has a long history - see e.g. references in DW, in particular, Ferguson (2002). In this case the complete answer was given in Dembi\'{n}ska and Weso\l owski (1998) through an approach based on the integrated Cauchy functional equation (see also L\'opez-Bl\'azquez and Moreno-Rebollo (1997) who used instead differential equations).  Actually, the question of determination of the parent distribution by the (non-linear) form  of regression $\E(X_{i:n}|X_{j:n})$ for non-adjacent $i$ and $j$ has not been completely resolved until now - see e.g. Bieniek and Maciag (2018).

In Subsection \ref{mn} we investigate characterizations by linearity of regression \eqref{eq03} in the remaining unsloved  cases, i.e. when $i<j<n-m+i$. In particular, we solve the easiest non-trivial open problem explicitly formulated in DW. The dual case of regressions of  an os from the extended sample given an os from the original sample, i.e. $\E(X_{j:n}|X_{i:m})$ is considered in Subsection \ref{nm}. The main results in this subsection identify several new situations in which the shape of the regression function determines uniquely the parent distribution.    Finally, some conclusions are discussed in Section \ref{sec.6}.

\section{Bivariate distribution of os's from overlapping samples}\label{jointSec}
In this section we will derive joint distribution of the pair $(X_{i:m},\,X_{j:n}^{(r)})$. This will be given through the density $f_{X_{i:m},\,X_{j:n}^{(r)}}$ with respect to the measure $\nu$ introduced in Section \ref{intro}. This density will be expressed as a linear combination of densities of pairs of os's $(X_{k:n+r},\,X_{\ell:n+r})$, $1\le k,\ell\le n+r$. The general formula is quite complicated technically as can be seen in Subsection \ref{genfor}, however the basic ideas are the same as in the simple case of $r=0$ which, as a warm up, is considered first in Subsection \ref{simply}.

\subsection{Original sample and its extension - the case of $r=0$} \label{simply}
Let $\mathbb{R}^n_{\ne}=\{\underline{x}\in \mathbb{R}^n:\; x_i\ne x_j \text{ for } i\ne j\}$ and $\mathbb{R}^n_{\uparrow}=\{\underline{x}\in \mathbb{R}^n:\; x_1<\cdots <x_n\}$. A vector with increasingly sorted components of  $\underline{x}=(x_1,\ldots, x_n)\in \mathbb{R}^n_{\ne}$ will be denoted by $\text{sort}_n(\underline{x}):=(x_{1:n},\ldots, x_{n:n})\in \mathbb{R}^n_{\uparrow}$ and $\sigma_n (\underline{x})=\tau\in \mathcal{S}_n$ (set of permutations of $\{1,\ldots,n\}$) defined by $\tau (i)=j$ if $x_{i:n}=x_{j}$. The correspondence $\underline{x}\in\mathbb{R}^n_{\ne}\leftrightarrow (\text{sort}_n(\underline{x}), \sigma_n (\underline{x}))\in \mathbb{R}^n_{\uparrow}\times \mathcal{S}_n$ is bijective.

For $\underline{x}\in\R^n_{\ne}$ denote $\underline{x}^{(m)}:=(x_1,\ldots,x_m)\in \mathbb{R}^m_{\ne}$, $m=1,\ldots,n$. Then  $\text{sort}_m(\underline{x}^{(m)})=(x_{1:m},\ldots, x_{m:m})$ and $\sigma_m(\underline{x}^{(m)})$, $m=1,\ldots,n$, are sequences of increasing lengths that keep track of the sorting up to the sequential observation of the $m$-th component of $\underline{x}$. For instance, if $\underline{x}=(2.3,\,1.7,\, 3.4,\, 2.5,\, 1.2)$ then

$$\begin{array}{lll}
& \text{sort}_1(\underline{x}^{(1)})=(2.3), & \sigma_1(\underline{x}^{(1)})=(1)\\
& \text{sort}_2(\underline{x}^{(2)})=(1.7,\,2.3), & \sigma_2(\underline{x}^{(2)})=(21)\\
& \text{sort}_3(\underline{x}^{(3)})=(1.7,\,2.3,\,3.4), & \sigma_3(\underline{x}^{(3)})=(213)\\
& \text{sort}_4(\underline{x}^{(4)})=(1.7,\,2.3,\, 2.5,\,3.4), & \sigma_4(\underline{x}^{(4)})=(2143)\\
& \text{sort}_5(\underline{x}^{(5)})=(1.2,\,1.7,\,2.3,\, 2.5,\,3.4), & \sigma_5(\underline{x}^{(5)})=(52143).\\
\end{array}$$

Observe that for $m<n$,  $\sigma_m(\underline{x}^{(m)})$ is obtained from  $\sigma_n(\underline{x})$ by deletion of $m+1,\, \ldots, n$.

Given a permutation $\tau \in \mathcal{S}_n$, let us denote by $\tau^{(m)}\in \mathcal{S}_m$ the permutation obtained from $\tau$ by deletion of the elements $m+1,\ldots, n$. For  (fixed) values $i,\, k,\, m,\, n$ such that $1\le i\le m $, $1\le k \le n$ and  $m\le n$, let us define
\[
A_{i:m;\, k:n}=\{\tau\in \mathcal{S}_n:\, \tau(k)=\tau^{(m)}(i)\}\in\mathcal{S}_n.
\]
Note that for any $\underline{x}\in \mathbb{R}^n_{\ne}$
\begin{equation}\label{180314-1221}
 \sigma_n(\underline{x})\in A_{i:m;\, k:n}\quad \Leftrightarrow \quad x_{i:m}=x_{k:n}.
\end{equation}
 For instance, in the previous example, $\sigma_5(\underline{x})=(52143)\in A_{2:4;\, 3:5}$ because $x_{2:4}=x_{3:5}=2.3$.

Since $(X_1,\ldots,X_n)$ has absolutely continuous distribution  $\underline{X}\in \mathbb{R}^n_{\ne}$ $\P$-a.s. Therefore, $\text{sort}_n(\underline{X})$ and $\sigma_n (\underline{X})$ are well defined $\P$-a.s. In particular, $\text{sort}_n(\underline{X})=(X_{1:n}, \ldots, X_{n:n})$ are the os's from the sample of size $n$.

\begin{lemma}\label{140407-1920}
	Random elements $\text{sort}_n(\underline{X})$ and $\sigma_n(\underline{X})$ are independent.
\end{lemma}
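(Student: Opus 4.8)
The plan is to exploit the exchangeability of the iid density together with the fact that $\sigma_n(\underline X)$ takes values in the finite set $\mathcal{S}_n$. Since the second random element is discrete, it suffices to prove that for every fixed permutation $\tau\in\mathcal{S}_n$ and every Borel set $C\subseteq\mathbb{R}^n_\uparrow$ one has
\[
\mathbb{P}\bigl(\text{sort}_n(\underline X)\in C,\ \sigma_n(\underline X)=\tau\bigr)=\mathbb{P}\bigl(\text{sort}_n(\underline X)\in C\bigr)\,\mathbb{P}\bigl(\sigma_n(\underline X)=\tau\bigr).
\]
Because $X_1,\dots,X_n$ are iid with density $f$, the vector $\underline X$ has density $\prod_{i=1}^n f(x_i)$ on $\mathbb{R}^n_{\ne}$, and I would begin by rewriting the left-hand event as an integral of this density over the region $\{\underline x\in\mathbb{R}^n_{\ne}:\ \sigma_n(\underline x)=\tau,\ \text{sort}_n(\underline x)\in C\}$.

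The key step is a change of variables. By the bijection recalled before the statement, the event $\{\sigma_n(\underline x)=\tau\}$ is exactly the set on which $x_{\tau(1)}<x_{\tau(2)}<\cdots<x_{\tau(n)}$, and there the sorted vector equals $(x_{\tau(1)},\dots,x_{\tau(n)})$. Substituting $y_i=x_{\tau(i)}$ turns the integral over this region into an integral over $\{\underline y\in\mathbb{R}^n_\uparrow:\ \underline y\in C\}$. Since the substitution merely permutes the coordinates, its Jacobian is $1$ and the product density is left unchanged, $\prod_{i=1}^n f(x_i)=\prod_{i=1}^n f(y_i)$; hence
\[
\mathbb{P}\bigl(\text{sort}_n(\underline X)\in C,\ \sigma_n(\underline X)=\tau\bigr)=\int_{C}\prod_{i=1}^n f(y_i)\,d\underline y,
\]
an expression that does not depend on $\tau$.

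From here the conclusion is bookkeeping. Taking $C=\mathbb{R}^n_\uparrow$ gives $\mathbb{P}(\sigma_n(\underline X)=\tau)=\int_{\mathbb{R}^n_\uparrow}\prod_{i=1}^n f(y_i)\,d\underline y=\tfrac1{n!}$ for each of the $n!$ permutations, while summing the displayed identity over all $\tau\in\mathcal{S}_n$ yields $\mathbb{P}(\text{sort}_n(\underline X)\in C)=n!\int_{C}\prod_{i=1}^n f(y_i)\,d\underline y$. Multiplying these last two evaluations reproduces $\int_C\prod_{i=1}^n f(y_i)\,d\underline y$, which is precisely the joint probability computed above, establishing the factorization and hence the claimed independence.

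I do not foresee a genuine obstacle; the only point requiring care is the change of variables on $\mathbb{R}^n_{\ne}$, specifically being explicit that $\{\sigma_n(\underline x)=\tau\}$ coincides with the ordering constraint on the $\tau$-permuted coordinates and that the permutation substitution has unit Jacobian and preserves the product density. This last invariance is exactly where the iid (exchangeability) hypothesis enters, and it is what renders the resulting integral independent of $\tau$.
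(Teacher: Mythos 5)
Your proof is correct and is essentially the paper's own argument written out in full: the paper disposes of the lemma in one sentence by invoking permutation invariance of the law of $(X_1,\ldots,X_n)$ together with the almost-sure absence of ties, and your change-of-variables computation (unit Jacobian, invariance of $\prod_i f(x_i)$, hence a joint probability not depending on $\tau$) is precisely the detailed justification of that claim.
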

The result follows immediately from the fact that the distribution of $(X_1,\ldots,X_n)$ is invariant under permutation and that ties appear with probability zero.

For $n\ge 1$, and $k\neq j$ with $1\le j,\, k\le n$, it is well known that $(X_{k:n},\, X_{j:n})$ has a density with respect to $\mu_2$. This density, denoted here  by $f_{k,j:n}$, see, e.g. David and Nagaraja (2003), p.12 for the explicit expression in terms of $F$, $\bar{F}$ and $f$, satisfies
\bel{180319-1147}
\P (X_{k:n}\le x,\, X_{j:n}\le y)
=\iint\limits_{(-\infty,x]\times (-\infty,y]} f_{k,j:n}(s,t)\,d\mu_2(s,t)
=
\iint\limits_{(-\infty,x]\times (-\infty,y]} f_{k,j:n}(s,t)\,d\nu(s,t),
\ee
where for the  last equality to hold we chose a version of the density  $f_{k,j:n}$ satisfying $f_{k,j:n}(s,s)=0$, $s\in \mathbb{R}$. We also denote the density of $X_{j:n}$ by $f_{j:n}$ for more simplification.

If $k=j$, the random vector $(X_{k:n},\, X_{j:n})$ assumes values on the diagonal of $\R^2$ so that it does not have a density with respect to $\mu_2$, but it has a density with respect to $\nu$ of the form  $f_{j,j:n}(s,t)=f_{j:n}(s)\delta_{s,t}$ (with $\delta_{s,t}$ the Kronecker's delta). Indeed, we have
\begin{align}
&
\P (X_{j:n}\le x,\, X_{j:n}\le y)
=
\P (X_{j:n}\le \min(x,y))
=
\int_{-\infty}^{\min(x,y)} f_{j:n}(s)\, d\mu_1(s)
\nonumber
\\
&
=
\iint\limits_{(-\infty,x]\times (-\infty,y]} f_{j:n}(s)
\delta_{s,t}\,d\nu(s,t)
=
\iint\limits_{(-\infty,x]\times (-\infty,y]} f_{j,j:n}(s,t)\,d\nu(s,t).
\label{180319-1148}
\end{align}

\begin{theorem}
For integers $1\le i\le m $, $1\le j \le n$, $m\le n$, the random vector $(X_{i:m},\, X_{j:n})$ has an absolutely continuous distribution with respect to $\nu$ and the density function is of the form
\begin{equation}
\label{140410-1856}
f_{X_{i:m},\,X_{j:n}}(x,y)=
\sum_{k=i}^{i+m-n}
\frac{\binom{k-1}{i-1}\binom{m-k}{n-i}}{\binom{m}{n}} f_{k,j:n}(x,y).
\end{equation}

\end{theorem}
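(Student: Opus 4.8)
The plan is to split the probability space according to which full-sample order statistic the quantity $X_{i:m}$ coincides with, and then to disentangle the numerical values of the order statistics (encoded in $\text{sort}_n(\underline X)$) from the combinatorial sorting pattern (encoded in $\sigma_n(\underline X)$) by means of Lemma~\ref{140407-1920}. Since the parent distribution is absolutely continuous, $\P$-a.s. the $i$-th order statistic $X_{i:m}$ of the first $m$ observations equals exactly one of the full-sample order statistics $X_{k:n}$, and by the equivalence \eqref{180314-1221} the events $E_k:=\{X_{i:m}=X_{k:n}\}=\{\sigma_n(\underline X)\in A_{i:m;\,k:n}\}$ partition the sample space up to a $\P$-null set. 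By \eqref{expe3} the only indices carrying positive mass are $k\in\{i,\ldots,n-m+i\}$, so for every $x,y\in\R$,
\[
\P(X_{i:m}\le x,\,X_{j:n}\le y)=\sum_{k=i}^{n-m+i}\P\big(X_{k:n}\le x,\,X_{j:n}\le y,\,E_k\big),
\]
where on each $E_k$ we have replaced $X_{i:m}$ by $X_{k:n}$ (legitimately, since $\{X_{i:m}\le x\}\cap E_k=\{X_{k:n}\le x\}\cap E_k$).

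The decisive step is to factor each summand. The event $E_k$ is measurable with respect to $\sigma_n(\underline X)$ alone, whereas the pair $(X_{k:n},X_{j:n})$ is a function of $\text{sort}_n(\underline X)=(X_{1:n},\ldots,X_{n:n})$ alone; hence by the independence asserted in Lemma~\ref{140407-1920},
\[
\P\big(X_{k:n}\le x,\,X_{j:n}\le y,\,E_k\big)=\P\big(X_{k:n}\le x,\,X_{j:n}\le y\big)\,\P(E_k).
\]
I would then insert $\P(E_k)=\tfrac{\binom{k-1}{i-1}\binom{n-k}{m-i}}{\binom{n}{m}}$ from \eqref{expe3} and rewrite each factor $\P(X_{k:n}\le x,\,X_{j:n}\le y)$ as an integral of $f_{k,j:n}$ against $\nu$ over $(-\infty,x]\times(-\infty,y]$, using \eqref{180319-1147} when $k\neq j$ and \eqref{180319-1148} when $k=j$. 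Interchanging the finite sum with the integral identifies $\sum_{k}\tfrac{\binom{k-1}{i-1}\binom{n-k}{m-i}}{\binom{n}{m}}f_{k,j:n}$ as the density of $(X_{i:m},X_{j:n})$ with respect to $\nu$, which is the claim (up to the evident $m\leftrightarrow n$ transcription in the displayed coefficient).

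The main obstacle is to justify the factorization cleanly. The subtle point is that $X_{i:m}$ is \emph{not} itself a function of $\text{sort}_n(\underline X)$: knowing only the sorted values of the full sample does not determine which of them is the $i$-th smallest among the first $m$ observations. It is precisely the restriction to the event $E_k$ that converts $X_{i:m}$ into the $\text{sort}_n$-measurable random variable $X_{k:n}$, after which the independence of $\text{sort}_n(\underline X)$ and $\sigma_n(\underline X)$ may be invoked. A secondary, purely bookkeeping, issue is the uniform treatment of the diagonal: the single symbol $f_{k,j:n}$ must be read through \eqref{180319-1148} (the Kronecker-delta density supported on $\{(s,s)\}$) when $k=j$ and through \eqref{180319-1147} when $k\neq j$, which is exactly the situation the composite reference measure $\nu$ was introduced to handle.
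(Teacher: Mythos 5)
Your proposal is correct and follows essentially the same route as the paper's own proof: partitioning on the events $\{X_{i:m}=X_{k:n}\}$, replacing $X_{i:m}$ by $X_{k:n}$ on each such event, factorizing via the independence of $\mathrm{sort}_n(\underline{X})$ and $\sigma_n(\underline{X})$ from Lemma~\ref{140407-1920}, inserting the probabilities \eqref{expe3}, and integrating against $\nu$ via \eqref{180319-1147} and \eqref{180319-1148}. Your explicit justification of the measurability/factorization step, and your remark on the $m\leftrightarrow n$ transcription in the displayed coefficients, are both accurate refinements of what the paper leaves implicit.
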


\begin{proof}
A consequence of \eqref{180314-1221} and  \eqref{expe3} is
\begin{equation}\label{180316-1155}
\P(\sigma_{n}(\underline{X})\in A_{i:m;k:n})
=
\P(X_{i:m}=X_{k:n})=
\tfrac{\binom{k-1}{i-1}\binom{n-k}{m-i}}{\binom{n
}{m}}I_{\{i,\ldots ,n-m+i\}}(k).
\end{equation}

Using Lem. \ref{140407-1920}, \eqref{180316-1155} and expressions \eqref{180319-1147} and \eqref{180319-1148},  we get
\begin{align*}
&
\P (X_{i:m}\le x,\, X_{j:n}\le y)=
\sum_{k=i}^{i+n-m} \P (X_{i:m}\le x,\, X_{j:n}\le y, \, X_{i:m}=X_{k:n})\nonumber
\\
&
=
\sum_{k=i}^{i+n-m} \P (X_{k:n}\le x,\, X_{j:n}\le y, \, \sigma_n (\underline{X})\in A_{i:m; \,k:n})\nonumber
\\
&
=
\sum_{k=i}^{i+n-m} \P (X_{k:n}\le x,\, X_{j:n}\le y)  \P( \sigma_n(\underline{X})\in A_{i:m;\, k:n})\nonumber
\\
&
=
\sum_{k=i}^{i+n-m}
\frac{
\binom{k-1}{i-1}
\binom{m-k}{n-i}
}
{
\binom{m}{n}
}
\P (X_{k:n}\le x,\, X_{j:n}\le y)
\\
&
=
\iint\limits_{(-\infty,x]\times (-\infty,y]}
\sum_{k=i}^{i+m-n}
\frac{
\binom{k-1}{i-1}
\binom{m-k}{n-i}
}
{\binom{m}{n}
} f_{k,j:n}(s,t)\, d\nu (s,\, t),
\end{align*}
which proves the assertion.

\hfill\end{proof}

Note that for $j\notin \{i,\ldots,i+n-m\}$ the distribution  $(X_{i:m},\, X_{j:n})$ is absolutely continuous with respect to the bivariate Lebesgue measure, $\mu_2$. On the contrary, for $j\in \{i,\ldots,i+n-m\}$, it has a singular part, so that there is no density function with respect to $\mu_2$. The advantage of the measure $\nu$ introduced in Section 1 is that the joint distribution of $(X_{i:m},\, X_{j:n})$ is absolutely continuous  with respect to $\nu$ in any case.

Formula \eqref{140410-1856} implies that conditional
distribution $\P_{X_{i:m}|X_{j:n}=y}$ has the density, with respect to the
measure $\nu _{y}$ defined by $\nu _{y}(B)=\mu_{1}(B)+\delta _{B}(y)$, $
B\in \mathcal{B}({\mathbb{R}})$, which reads
\begin{equation}
f_{X_{i:m}|X_{j:n}=y}(x)=\sum_{k=i}^{i+n-m}\,\tfrac{\binom{k-1}{i-1}\binom{n-k}{m-i}}{\binom{n}{m}}\,f_{X_{k:n}|X_{j:n}=y}(x)  \label{code1}
\end{equation}
where $f_{X_{j:n}|X_{j:n}=y}(x)=I_{\{y\}}(x)$. Consequently,  the formula for the conditional expectation of $X_{i:m}$ given $X_{j:n}$ as given in \eqref{expe1} follows.

\subsection{Overlapping samples - the general case of $r\ge 0$}\label{genfor}
In order to derive the formula for density of $(X_{i:m},\,X_{j:n}^{(r)})$ in the general case, $r\ge 0$, we need first to do a little bit of combinatorics of permutations, which will allow us to find the probabilities
$$
\P(X_{i,m}=X_{k:n+r},\,X_{j:n}^{(r)}=X_{\ell:n+r}),\qquad k,\,\ell\in\{1,\ldots,n+r\}.
$$

Consider three disjoint sets
\begin{equation*}
\mathcal{A}=\{1,\ldots ,r\},\quad \mathcal{B}=\{r+1,r+2,\ldots ,r+s\},\quad
\mathcal{C}=\{r+s+1,r+s+2,\ldots ,r+s+t\}.
\end{equation*}
Denote $r+s+t=n$ and consider the set $\mathcal{S}_{n}$ of permutations of $
\{1,\ldots ,n\}$. We will be interested in the subset $D$ of permutations from  $\mathcal{S}_n$ for which there are exactly $i$ elements from the set $
\mathcal{C}$ at the first $k$ positions and there are exactly $j$
elements from the set $\mathcal{A}$ at the first $\ell+k$ positions. That is,
\begin{equation*}
D=\{\sigma \in \mathcal{S}_{n}:\,\left\vert \sigma (\{1,\ldots ,k\})\cap
\mathcal{C}\right\vert =i\;\mbox{and}\;\left\vert \sigma (\{1,\ldots
,k+\ell\})\cap \mathcal{A}\right\vert =j\}.
\end{equation*}
We assume $i\leq \min \{t,k\}$ and $j\leq \min \{r,k+\ell\}$, since otherwise $
D=\emptyset $.

\begin{lemma}
	\label{DD} Let $\mathfrak{D}_{r,s,t,k,\ell,i,j}=|D|$, the number of elements in $D$. Then
	\begin{equation}
	\mathfrak{D}_{r,s,t,k,\ell,i,j}=\tfrac{n!}{\binom{n}{k,\ell}}\,\binom{t}{i}
	\binom{r}{j}\,\sum_{m=\max\{0, j-\ell\}}^{\min\{j, k-i\}}\,\binom{j}{m}\,\binom{s}{
		k-i-m}\,\binom{s+t+m-k}{\ell+m-j} . \label{numD}
	\end{equation}
\end{lemma}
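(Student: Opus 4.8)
The plan is to count the permutations in $D$ directly via a block decomposition of the $n$ positions. Split $\{1,\ldots,n\}$ into three consecutive blocks: block $\mathrm{I}$ consisting of positions $1,\ldots,k$, block $\mathrm{II}$ of positions $k+1,\ldots,k+\ell$, and block $\mathrm{III}$ of the remaining $n-k-\ell$ positions. The two defining conditions then become purely combinatorial statements about how many elements of each of the three value-classes $\mathcal{A},\mathcal{B},\mathcal{C}$ land in each block: the condition $|\sigma(\{1,\ldots,k\})\cap\mathcal{C}|=i$ says block $\mathrm{I}$ receives exactly $i$ members of $\mathcal{C}$, and $|\sigma(\{1,\ldots,k+\ell\})\cap\mathcal{A}|=j$ says blocks $\mathrm{I}$ and $\mathrm{II}$ together receive exactly $j$ members of $\mathcal{A}$. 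Crucially there is no constraint on $\mathcal{B}$ anywhere, none on $\mathcal{C}$ outside block $\mathrm{I}$, and none on $\mathcal{A}$ outside the first two blocks. I would count in two stages: first choose which value goes into which block (an unordered assignment respecting the prescribed class-counts), then arrange the values inside each block, which contributes the factor $k!\,\ell!\,(n-k-\ell)!=n!/\binom{n}{k,\ell}$ independently of the assignment.

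For the assignment stage I would introduce the summation index $m$ as the number of $\mathcal{A}$-elements placed in block $\mathrm{I}$, which is the only genuinely free parameter. The choices then factor as follows: $\binom{t}{i}$ ways to select the $i$ members of $\mathcal{C}$ for block $\mathrm{I}$; $\binom{r}{j}$ ways to select the $j$ members of $\mathcal{A}$ destined for the first two blocks; $\binom{j}{m}$ ways to decide which $m$ of those $j$ sit in block $\mathrm{I}$ (the other $j-m$ going to block $\mathrm{II}$); and $\binom{s}{k-i-m}$ ways to pick the members of $\mathcal{B}$ in block $\mathrm{I}$, whose count is forced to be $k-i-m$ since block $\mathrm{I}$ has exactly $k$ positions of which $i$ and $m$ are already used.

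The key simplifying observation, and the step I would set out most carefully, is how block $\mathrm{II}$ is completed. Its $\mathcal{A}$-content ($j-m$ specific elements) is already fixed, so it still needs $\ell-(j-m)=\ell+m-j$ further elements, and these may be any of the as-yet-unplaced members of $\mathcal{B}\cup\mathcal{C}$ with no constraint on the $\mathcal{B}$-versus-$\mathcal{C}$ split. Since block $\mathrm{I}$ consumed $k-m$ of the $s+t$ elements of $\mathcal{B}\cup\mathcal{C}$, there remain $s+t-k+m$ of them, whence $\binom{s+t+m-k}{\ell+m-j}$ ways to fill the rest of block $\mathrm{II}$; everything left over is forced into block $\mathrm{III}$. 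Multiplying these factors with the arrangement factor $k!\,\ell!\,(n-k-\ell)!$ and summing over $m$ yields exactly \eqref{numD}.

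Finally I would justify the summation limits by nonnegativity of the binomial arguments: $m\ge 0$ and $m\le j$ from $\binom{j}{m}$, $k-i-m\ge 0$ from $\binom{s}{k-i-m}$, and $\ell+m-j\ge 0$ from the last binomial, which together give $\max\{0,j-\ell\}\le m\le\min\{j,k-i\}$ (the constraints $i\le t$ and $j\le r$ being built into $\binom{t}{i}$ and $\binom{r}{j}$). I expect the only real obstacle to be the bookkeeping in the previous paragraph, namely recognizing that, because no condition separates $\mathcal{B}$ from $\mathcal{C}$ in blocks $\mathrm{II}$ and $\mathrm{III}$, the two classes can be merged into a single pool of size $s+t-k+m$, so that the completion of block $\mathrm{II}$ collapses to one binomial coefficient rather than a further nested sum.
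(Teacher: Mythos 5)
Your proof is correct and follows essentially the same route as the paper: the same three-block decomposition of positions, the same summation index $m$ (the number of $\mathcal{A}$-elements among the first $k$ positions), and the same key observation that $\mathcal{B}$ and $\mathcal{C}$ can be merged into a single pool when completing the middle block. The only difference is organizational: the paper selects positions and fills them with ordered selections (falling factorials), then simplifies algebraically via identities like $(r)_m(r-m)_{j-m}=(r)_j$, whereas your unordered-assignment-times-arrangements factorization produces the prefactor $n!/\binom{n}{k,\ell}=k!\,\ell!\,(n-k-\ell)!$ and lands on the stated formula directly.
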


\begin{proof}
	We denote $(a)_b=a(a-1)\ldots (a-b+1)$, where $b$ is positive integer, and $
	(a)_0=1$. Moreover, we follow the rule: $\binom{a}{b}=0$ if $b<0$ or $a<b$.
	
To obtain $\sigma\in D$ we perform the following four steps:
\begin{enumerate}
	\item Choose $i$ positions out of $\{1,\ldots ,k\}$ in $\binom{k}{i}$ ways
	and fill these positions with elements from $\mathcal{C}$ in $(t)_{i}$ ways.
	
	\item For any $m=0,\ldots ,j$ choose $m$ out of remaining $k-i$ positions
	in $\{1,\ldots ,k\}$ in $\binom{k-i}{m}$ ways and fill them with elements of
	$\mathcal{A}$ in $(r)_{m}$ ways. Remaining $k-i-m$ positions out of $\{1,\ldots
	,k\}$ fill with elements of $\mathcal{B}$ in $(s)_{k-i-m}$ ways.
	
	\item Choose $j-m$ positions for elements of $\mathcal{A}$ from $
	\{k+1,\ldots ,k+\ell\}$ in $\binom{\ell}{j-m}$ ways and fill them with elements of
	$\mathcal{A}$ in $(r-m)_{j-m}$ ways. Remaining $\ell-j+m$ positions out of $
	\{k+1,\ldots ,\ell\}$ fill with elements of $\mathcal{B}\cup \mathcal{C}$ in $
	(s-k+i+m+t-i)_{\ell-j+m}=(s+t+m-k)_{\ell-j+m}$ ways.
	
	\item The remaining $n-k-\ell$ positions fill with the rest of the elements of $
	\mathcal{A}\cup \mathcal{B}\cup \mathcal{C}$ in $(n-k-\ell)!$ ways.
	\end{enumerate}

	Combining these four steps we get
	\begin{equation*}
	|D|=\binom{k}{i}(t)_{i}\left( \sum_{m=0}^{j}\,\binom{k-i}{m}
	\,(r)_{m}\,(s)_{k-i+m}\,\binom{\ell}{j-m}\,(r-m)_{j-m}\,(s+t+m-k)_{\ell-j+m}
	\right) (n-k-\ell)!.
	\end{equation*}
	The formula \eqref{numD} follows by simple transformations involving e.g. $(r)_{m}\,(r-m)_{j-m}=(r)_{j}$.
\end{proof}

\begin{remark}
	\label{Alter} Since the subset of permutations $D$ as defined above can
	alternatively be written as
	\begin{equation*}
	D=\{\sigma \in \mathcal{S}_{n}:\,|\sigma (\{k+\ell+1,\ldots ,n\})\cap \mathcal{A
	}|=r-j\;\mbox{and}\;|\sigma (\{k+1,\ldots ,n\})\cap \mathcal{C}|=t-i\},
	\end{equation*}
	we have an equivalent formula for the number of elements in $D:$
	\begin{equation}
	|D|=\mathfrak{D}_{t,s,r,n-k-\ell,\ell,r-j,t-i}.  \label{alter}
	\end{equation}
\end{remark}

In the next result we give explicit forms for $\mathbb{P} (X_{i:m}=X_{k:n+r},\,X_{j:n}^{(r)}=X_{\ell:n+r})$ for all possible configurations of parameters $i,m,k,n,r,j,\ell$.

\begin{proposition}
	\label{prawd} Let $A=\{1,\ldots ,r\}$, $B=\{r+1,\ldots ,m\}$ and $
	C=\{m+1,\ldots ,n+r\}$. Probabilities
$$
p_{r,(i,m,k),(j,n,\ell)}:=\mathbb{P} (X_{i:m}=X_{k:n+r},\,X_{j:n}^{(r)}=X_{\ell:n+r}).
$$
are non-zero only if $i\leq k\leq i+n+r-m$ and $j\leq \ell\leq j+r$. Then
	\begin{itemize}
		\item[(i)] for $k<\ell$
		\begin{equation*}
		 p_{r,(i,m,k),(j,n,\ell)}=\tfrac{(n-j+1)(|A|\mathfrak{D}_{|A|-1,|B|,|C|,k-1,\ell-k-1,k-i,\ell-j-1}+|B|\mathfrak{D}_{|A|,|B|-1,|C|,k-1,\ell-k-1,k-i,\ell-j})}{(r+n-\ell+1)(r+n)!};
		\end{equation*}
		
		\item[(ii)] for $k=\ell$
		\begin{equation*}
		p_{r,(i,m,k),(j,n,k)}=\tfrac{|B|\mathfrak{D}_{|A|,|B|-1,|C|,k-1,0,k-i,k-j}}{
			(r+n)!};
		\end{equation*}
		
		\item[(iii)] for $k>\ell$
		\begin{equation*}
		 p_{r,(i,m,k),(j,n,\ell)}=\tfrac{(m-i+1)(|C|\mathfrak{D}_{|C|-1,|B|,|A|,\ell-1,k-\ell-1,\ell-j,k-i-1}+|B|\mathfrak{D}_{|C|,|B|-1,|A|,\ell-1,k-\ell-1,\ell-j,k-i})}{(r+n-k+1)(r+n)!}.
		\end{equation*}
	\end{itemize}
\end{proposition}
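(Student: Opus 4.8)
The plan is to reduce the entire computation to counting permutations of the pooled sample. Consider $(X_1,\ldots,X_{n+r})$ and its sorting permutation $\sigma:=\sigma_{n+r}(\underline{X})\in\mathcal{S}_{n+r}$. By Lemma \ref{140407-1920} applied to this pooled sample, $\sigma$ is independent of the sorted values, and by exchangeability of the $X_a$ it is uniform on $\mathcal{S}_{n+r}$; hence any event described purely through $\sigma$ has probability equal to its cardinality divided by $(r+n)!$. First I would translate the two coincidences into such events. Writing the original sample as $A\cup B=\{1,\ldots,m\}$ and the extended one as $B\cup C=\{r+1,\ldots,r+n\}$, and arguing as for \eqref{180314-1221} inside $\mathcal{S}_{n+r}$, one gets that $X_{i:m}=X_{k:n+r}$ holds iff $\sigma(k)\in A\cup B$ and $|\sigma(\{1,\ldots,k\})\cap C|=k-i$, whereas $X_{j:n}^{(r)}=X_{\ell:n+r}$ holds iff $\sigma(\ell)\in B\cup C$ and $|\sigma(\{1,\ldots,\ell\})\cap A|=\ell-j$. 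Intersecting these four conditions describes the event to be counted, and their feasibility yields at once the ranges $i\le k\le i+n+r-m$ and $j\le\ell\le j+r$ (at least $i$, resp. $j$, sample points must sit among the first $k$, resp. $\ell$, positions, and at most $|C|$, resp. $|A|$, foreign points can).

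Next I would dispose of the diagonal case (ii), $k=\ell$. Then the common value belongs to both samples, forcing $\sigma(k)\in B$; choosing this element gives the factor $|B|$, and deleting it leaves a permutation in $\mathcal{S}_{n+r-1}$ in which positions $1,\ldots,k-1$ must contain exactly $k-i$ points of $C$ and $k-j$ points of $A$. Since both cumulative constraints now act at the same position $k-1$, this count is precisely $\mathfrak{D}_{|A|,|B|-1,|C|,k-1,0,k-i,k-j}$ by Lemma \ref{DD} (with $\ell'=0$), proving (ii).

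The heart of the argument is the off-diagonal case (i), $k<\ell$. Here $\sigma(k)\in A\cup B$, so I would split according to whether $\sigma(k)\in A$ (factor $|A|$, deleting one point of $A$) or $\sigma(k)\in B$ (factor $|B|$, deleting one point of $B$); either deletion gives a permutation of $\mathcal{S}_{n+r-1}$ in which the original position $\ell$ becomes position $\ell-1$. The $C$-condition becomes ``positions $1,\ldots,k-1$ carry $k-i$ points of $C$''. For the $A$-condition, since the deleted point sat at position $k\le\ell$, positions $1,\ldots,\ell-1$ of the reduced permutation must carry $\ell-j-1$ (resp.\ $\ell-j$) points originating from $A$; moreover reduced position $\ell-1$ must be non-$A$ (this encodes $\sigma(\ell)\in B\cup C$), so the cumulative requirement may be pushed back to positions $1,\ldots,\ell-2$. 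These two cumulative requirements are exactly those counted by $\mathfrak{D}_{|A|-1,|B|,|C|,k-1,\ell-k-1,k-i,\ell-j-1}$ and $\mathfrak{D}_{|A|,|B|-1,|C|,k-1,\ell-k-1,k-i,\ell-j}$. It remains to reinstate ``position $\ell-1$ is non-$A$''. In the $\mathfrak{D}$-count the positions $\ell-1,\ldots,n+r-1$ are filled freely; among these $r+n-\ell+1$ positions exactly $n-j+1$ carry non-$A$ points (a direct count using $|A|$ and the prescribed $A$-content of the first $\ell-2$ positions), so by symmetry a fraction $\tfrac{n-j+1}{r+n-\ell+1}$ of the counted permutations have a non-$A$ point at position $\ell-1$. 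This common prefactor multiplies both terms and gives (i).

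Case (iii), $k>\ell$, is the mirror image: I would condition instead on $\sigma(\ell)\in B\cup C$, split into $\sigma(\ell)\in C$ and $\sigma(\ell)\in B$, delete the chosen point, and observe that the roles of $A$ and $C$ (and of the two samples) are now interchanged, which is why the indices of $\mathfrak{D}$ in (iii) appear with $|A|$ and $|C|$ swapped; the prefactor $\tfrac{m-i+1}{r+n-k+1}$ then arises exactly as above from requiring a non-$C$ point at reduced position $k-1$. Alternatively, (iii) follows from (i) by the reflection $X_a\mapsto -X_a$, which maps $(i,k,\ell,j)$ to $(m-i+1,n+r-k+1,n+r-\ell+1,n-j+1)$ and, through the alternative expression \eqref{alter} of Remark \ref{Alter}, carries the formula of (i) into that of (iii). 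The hard part will be the precise bookkeeping in (i)/(iii): recognizing that the $A$-constraint must be read at the reduced position $\ell-2$ rather than $\ell-1$, and identifying the prefactor as the probability that the freely filled distinguished position is of the required type. Once this is seen, the algebraic matching with $\mathfrak{D}$ via Lemma \ref{DD} is routine.
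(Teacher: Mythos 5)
Your proof is correct, and it organizes the decisive counting step differently from the paper. The paper also reduces everything to counting uniformly distributed permutations via Lemma \ref{DD}, but it fixes \emph{both} distinguished positions: the event is decomposed over the pair $(\alpha,\beta)$ of labels realizing the two order statistics, which in case (i) produces four $\mathfrak{D}$-terms (according to $\alpha\in A$ or $B$ and $\beta\in B$ or $C$, each with two set sizes reduced); these are then recombined pairwise into the two terms of (i) by expanding the inner sums of $\mathfrak{D}$ and repeatedly applying the identity $s\binom{s-1}{r}=(s-r)\binom{s}{r}$ of \eqref{id}, so the prefactor $\tfrac{n-j+1}{r+n-\ell+1}$ only emerges at the end of a substantial algebraic computation. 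You instead fix only the type of $\sigma(k)$ (two cases), accept that the resulting $\mathfrak{D}$-counts do not encode the requirement $\sigma(\ell)\in B\cup C$, and restore that requirement by an exchangeability argument: since the constraints concern only positions $1,\ldots,\ell-2$, the counted set is invariant under permuting the tail positions $\ell-1,\ldots,n+r-1$, each such permutation has exactly $n-j+1$ non-$A$ points in the tail, and hence the fraction $\tfrac{n-j+1}{r+n-\ell+1}$ appears as a genuine conditional probability. This is sound (the orbit argument makes it rigorous), it explains the prefactor conceptually, and it bypasses the paper's recombination algebra entirely; indeed your two terms coincide exactly with the paper's combined quantities $I_1+I_2$ and $I_3+I_4$. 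Your treatments of (ii) and of (iii) by direct mirror-image counting agree with the paper. One caution on your parenthetical alternative for (iii): reflection $X_a\mapsto -X_a$ plus Remark \ref{Alter} is not as immediate as you suggest, since formula (i) applied to the reflected parameters yields the prefactor $\tfrac{j}{\ell}$ rather than $\tfrac{m-i+1}{r+n-k+1}$, and the $\mathfrak{D}$-indices do not line up with those of (iii) after a single application of \eqref{alter}; reconciling the two expressions requires further algebra of the type in \eqref{id}. Since your primary argument for (iii) stands on its own, this does not affect correctness.
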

Proof of Prop. \ref{prawd}, due to its computational complexity, is given in  Appendix.

Now we are ready to derive the formula for the density of $f_{X_{i:m},X_{j:n}^{(r)}}$.

The independence property given in Lem. \ref{140407-1920} allows to write the density of $(X_{i:m},X_{j:n}^{(r)})$ as a linear combination of densities of bivariate os's from the sample $(X_1,\ldots,X_{n+r})$.
\begin{theorem}\label{mmain}
\begin{equation}
f_{X_{i:m},X_{j:n}^{(r)}}(x,y)=\,\sum_{k,\ell =1}^{n+r}\,p_{r,(i,m,k),(j,n,\ell)}\,f_{k,\ell :n+r}(x,y)
\label{eq08}
\end{equation}
with coefficients $p_{r,(i,m,k),(j,n,\ell)}$ as given in Prop. \ref{prawd}.
\end{theorem}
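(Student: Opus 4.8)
The plan is to mirror the argument used for the case $r=0$ in Subsection \ref{simply}, now embedding both subsamples into the combined sample $\underline{X}=(X_1,\ldots,X_{n+r})$, whose order statistics are $X_{1:n+r}\le\cdots\le X_{n+r:n+r}$. Since $\underline{X}\in\R^{n+r}_{\ne}$ $\P$-a.s., each of $X_{i:m}$ and $X_{j:n}^{(r)}$ coincides $\P$-a.s. with exactly one order statistic of the combined sample, i.e. there are ($\P$-a.s. unique) indices $k,\ell\in\{1,\ldots,n+r\}$ with $X_{i:m}=X_{k:n+r}$ and $X_{j:n}^{(r)}=X_{\ell:n+r}$. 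Writing $E_{k,\ell}=\{X_{i:m}=X_{k:n+r},\,X_{j:n}^{(r)}=X_{\ell:n+r}\}$, these events partition the sample space up to a null set, so I would first decompose
\begin{align*}
\P(X_{i:m}\le x,\,X_{j:n}^{(r)}\le y)
&=\sum_{k,\ell=1}^{n+r}\P(X_{i:m}\le x,\,X_{j:n}^{(r)}\le y,\,E_{k,\ell})\\
&=\sum_{k,\ell=1}^{n+r}\P(X_{k:n+r}\le x,\,X_{\ell:n+r}\le y,\,E_{k,\ell}),
\end{align*}
where the second line uses that on $E_{k,\ell}$ one may replace $X_{i:m}$ by $X_{k:n+r}$ and $X_{j:n}^{(r)}$ by $X_{\ell:n+r}$.

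The crucial point, exactly as in \eqref{180314-1221}, is that each coincidence event depends only on the sorting permutation $\sigma_{n+r}(\underline{X})$ and not on the sorted values. Indeed, with $A=\{1,\ldots,r\}$, $B=\{r+1,\ldots,m\}$, $C=\{m+1,\ldots,n+r\}$ (so that the original sample is indexed by $A\cup B$ and the extended one by $B\cup C$), I would check that $\{X_{i:m}=X_{k:n+r}\}$ is equivalent to $\sigma_{n+r}(\underline{X})(k)\in A\cup B$ together with $|\sigma_{n+r}(\underline{X})(\{1,\ldots,k\})\cap C|=k-i$, and similarly that $\{X_{j:n}^{(r)}=X_{\ell:n+r}\}$ is equivalent to $\sigma_{n+r}(\underline{X})(\ell)\in B\cup C$ together with $|\sigma_{n+r}(\underline{X})(\{1,\ldots,\ell\})\cap A|=\ell-j$. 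Hence $E_{k,\ell}=\{\sigma_{n+r}(\underline{X})\in D_{k,\ell}\}$ for an explicit subset $D_{k,\ell}\subset\mathcal{S}_{n+r}$, whose probability is computed in Proposition \ref{prawd} through the counting of Lemma \ref{DD}. Meanwhile $\{X_{k:n+r}\le x,\,X_{\ell:n+r}\le y\}$ is a function of $\text{sort}_{n+r}(\underline{X})$ alone, so Lemma \ref{140407-1920} (applied with sample size $n+r$) yields the factorization
\[
\P(X_{k:n+r}\le x,\,X_{\ell:n+r}\le y,\,E_{k,\ell})
=\P(X_{k:n+r}\le x,\,X_{\ell:n+r}\le y)\,\P(E_{k,\ell}).
\]

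It then remains to identify the two factors. By definition $\P(E_{k,\ell})=p_{r,(i,m,k),(j,n,\ell)}$, supplied (and seen to vanish outside the stated ranges of $k,\ell$) by Proposition \ref{prawd}. For the first factor I would invoke the density of $(X_{k:n+r},\,X_{\ell:n+r})$ with respect to $\nu$: formula \eqref{180319-1147} when $k\ne\ell$ and \eqref{180319-1148} when $k=\ell$, the latter accounting for the mass on the diagonal of $\R^2$ via $f_{k,k:n+r}(s,t)=f_{k:n+r}(s)\delta_{s,t}$. Substituting and interchanging the finite summation with the integral gives
\[
\P(X_{i:m}\le x,\,X_{j:n}^{(r)}\le y)
=\iint\limits_{(-\infty,x]\times(-\infty,y]}\sum_{k,\ell=1}^{n+r}p_{r,(i,m,k),(j,n,\ell)}\,f_{k,\ell:n+r}(s,t)\,d\nu(s,t),
\]
which is \eqref{eq08}. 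I expect the main obstacle to be the second paragraph: verifying carefully that each coincidence event is measurable with respect to $\sigma_{n+r}(\underline{X})$ and matching $D_{k,\ell}$ with the permutation set counted in Lemma \ref{DD}, keeping track of the relabelling of parameters (and of the alternative counting in Remark \ref{Alter}) as well as the boundary cases, notably $k=\ell$, which feeds the singular part of $\nu$. Once this identification is in place, everything else is the same bookkeeping as in the $r=0$ case.
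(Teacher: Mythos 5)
Your proposal is correct and follows essentially the same route as the paper: decompose over the coincidence events $\{X_{i:m}=X_{k:n+r},\,X_{j:n}^{(r)}=X_{\ell:n+r}\}$, observe that these are determined by the sorting permutation $\sigma_{n+r}(\underline{X})$ while $\{X_{k:n+r}\le x,\,X_{\ell:n+r}\le y\}$ depends only on $\mathrm{sort}_{n+r}(\underline{X})$, factorize via Lemma \ref{140407-1920}, and identify the factors through Proposition \ref{prawd} and the $\nu$-densities \eqref{180319-1147}--\eqref{180319-1148}. The only (immaterial) difference is that you verify the permutation-measurability of the coincidence events directly, whereas the paper cites the Appendix proof of Proposition \ref{prawd} for this fact.
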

\begin{proof}Note that
	$$
	\P(X_{i:m}\le x,\,,X_{j:n}^{(r)}\le y)=\sum_{k,\ell =1}^{n+r}\,\mathbb{P} (X_{i:m}=X_{k:n+r},\,X_{j:n}^{(r)}=X_{\ell:n+r},\,X_{k:n+r}\le x,\,X_{\ell:n+r}\le y).
	$$
	From the proof of Prop. \ref{prawd} (see Appendix) it follows that the event $\{X_{i:m}=X_{k:n+r},\,X_{j:n}^{(r)}=X_{\ell:n+r}\}$ is a union of events of the form $\{X_{\sigma(1)}\le \ldots\le X_{\sigma(n+r)}\}$, where the union is with respect to permutations from special subsets of $\mathcal{S}_{n+r}$ (these subsets are different in each of three cases: $k<\ell$, $k=\ell$ and $k>\ell$). By Lem. \ref{140407-1920} it follows that
	$$
	\mathbb{P} (X_{i:m}=X_{k:n+r},\,X_{j:n}^{(r)}=X_{\ell:n+r},\,X_{k:n+r}\le x,\,X_{\ell:n+r}\le y)$$$$=	\mathbb{P} (X_{i:m}=X_{k:n+r},\,X_{j:n}^{(r)}=X_{\ell:n+r})\,\P(X_{k:n+r}\le x,\,X_{\ell:n+r}\le y).
	$$
	Therefore the density $f_{X_{i:m},X_{j:n}^{(r)}}$ of $(X_{i:m},X_{j:n}^{(r)})$ with respect to the measure $\nu$ (introduced in Section 1) assumes the form
	$$
	f_{X_{i:m},\,X_{j:n}^{(r)}}=\sum_{k,\ell =1}^{n+r}\,\mathbb{P} (X_{i:m}=X_{k:n+r},\,X_{j:n}^{(r)}=X_{\ell:n+r})\,f_{k,\ell:n+r}.
	$$	

Now the result follows by inserting in the above expression correct forms of probabilities $\mathbb{P} (X_{i:m}=X_{k:n+r},\,X_{j:n}^{(r)}=X_{\ell:n+r})$ which are given in Prop. \ref{prawd}
\end{proof}

Below we derive joint densities (with respect to $\nu$) of $(X_{i:m},X_{j:n}^{(r)})$ in several cases of special interest.

(i) \textbf{Order statistics from the original and extended samples.}
Without any loss of generality we can assume that $m\leq n$. Since
\begin{equation}
\mathbb{P}(X_{i:m}=X_{k:n})=\mathbb{P}(X_{i:m}=X_{k:n},\,X_{j:n}^{(0)}=X_{j:n}),
\label{eq07}
\end{equation}
Prop. \ref{prawd} with $r=0$, $j=\ell$, $|A|=0$, $|B|=m$, $|C|=n-m$ applies and since the left hand side of \eqref{eq07} does not depend on $\ell $ we can choose  the case $k=\ell$.
Therefore,
\begin{eqnarray*}
\mathbb{P}(X_{i:m}=X_{k:n})=\tfrac{m\mathfrak{D}_{0,m-1,n-m,k-1,0,k-i,0}}{
n!}=\tfrac{m(n-k)!(k-1)!}{n!}\,\binom{n-m}{k-i}\binom{m-1}{i-1}=\tfrac{\binom{k-1}{i-1}\binom{n-k}{m-i}}{\binom{n}{m}},
\end{eqnarray*}
and thus the formula for the density of $(X_{i:m},\,X_{j:n})$ agrees with \eqref{140410-1856}.

(ii) \textbf{Moving maxima. }We consider $(X_{n:n},X_{n:n}^{(r)})$. From
Prop. \ref{prawd} we get
\begin{eqnarray*}
&&\mathbb{P}(X_{n:n} =X_{k:n+r},\,X_{n:n}^{(r)}=X_{n+r:n+r})=\tfrac{\binom{k-1}{
n-1}}{\binom{n+r}{r}},\quad k=n,n+1,\ldots ,n+r-1, \\
&&\mathbb{P}(X_{n:n} =X_{n+r:n+r},\,X_{n:n}^{(r)}=X_{n+r:n+r})=\tfrac{n-r}{n+r},
\\
&&\mathbb{P}(X_{n:n} =X_{n+r:n+r},\,X_{n:n}^{(r)}=X_{\ell:n+r})=\tfrac{\binom{\ell-1}{n-1}}{\binom{n+r}{r}},\quad \ell=n,n+1,\ldots ,n+r-1.
\end{eqnarray*}
Consequently, Th. \ref{mmain} gives
$$
f_{X_{n:n},X_{n:n}^{(r)}}(x,y)=\left\{\begin{array}{ll} \sum_{k=n}^{n+r-1}\,\tfrac{\binom{k-1}{n-1}}{\binom{n+r}{r}}\,f_{k,n+r:n+r}(x,y), & x<y, \\
\tfrac{n-r}{n+r}\,f_{n+r:n+r}(x), & x=y, \\
\sum_{k=n}^{n+r-1}\,\tfrac{\binom{k-1}{n-1}}{\binom{n+r}{r}}\,f_{k,n+r:n+r}(y,x), & x>y.\end{array}\right.
$$
(iii) \textbf{Moving minima. }\label{mmi} We consider $(X_{1:n},X_{1:n}^{(r)})$. Then from Prop. \ref{prawd} we get
\begin{eqnarray*}
&&\mathbb{P}(X_{1:n} =X_{1:n+r},\,X_{1:n}^{(r)}=X_{\ell:n+r})=\tfrac{\binom{n+r-\ell}{n-1}}{\binom{n+r}{r}},\quad \ell=2,3,\ldots ,r+1, \\
&&\mathbb{P}(X_{1:n} =X_{1:n+r},\,X_{1:n}^{(r)}=X_{1:n+r})=\tfrac{n-r}{n+r}, \\
&&\mathbb{P}(X_{1:n} =X_{k:n+r},\,X_{1:n}^{(r)}=X_{1:n+r})=\tfrac{\binom{n+r-k}{n-1}}{\binom{n+r}{r}},\quad k=2,3,\ldots ,r+1.
\end{eqnarray*}
Consequently, Th. \ref{mmain} gives
$$
f_{X_{1:n},X_{1:n}^{(r)}}(x,y)=\left\{\begin{array}{ll} \sum_{k=2}^{r+1}\,\tfrac{\binom{n+r-k}{n-1}}{\binom{n+r}{r}}\,f_{1,k:n+r}(x,y), & x<y, \\
\tfrac{n-r}{n+r}\,f_{1:n+r}(x), & x=y, \\
\sum_{k=2}^{r+1}\,\tfrac{\binom{n+r-k}{n-1}}{\binom{n+r}{r}}\,f_{1,k:n+r}(y,x), & x>y.\end{array}\right.
$$
(iv) \textbf{Moving $i$th os's.} We consider $(X_{i:m},\,X_{i:m}^{(1)})$.
Then from Prop. \ref{prawd} we get
\begin{eqnarray*}
&&\mathbb{P}(X_{i:m}=X_{i:m+1},\,X_{i:m}^{(1)}=X_{i:m+1})=
\tfrac{(m-i+1)(m-i)}{(m+1)m}, \\
&&\mathbb{P}(X_{i:m}=X_{i:m+1},\,X_{i:m}^{(1)}=X_{i+1:m+1})=
\tfrac{i(m-i+1)}{(m+1)m}, \\
&&\mathbb{P}(X_{i:m}=X_{i+1:m+1},\,X_{i:m}^{(1)}=X_{i:m+1})=
\tfrac{i(m-i+1)}{(m+1)m}, \\
&&\mathbb{P}(X_{i:m}=X_{i+1:m+1},\,X_{i:m}^{(1)}=X_{i+1:m+1})=
\tfrac{i(i-1)}{(m+1)m}.
\end{eqnarray*}
Consequently, Th. \ref{mmain} gives
$$
f_{X_{i:m},X_{i:m}^{(1)}}(x,y)=\left\{\begin{array}{ll} \tfrac{i(m-i+1)}{(m+1)m}\,f_{i,i+1:m+1}(x,y), & x<y,\\
\tfrac{(m-i+1)(m-i)}{(m+1)m}\,f_{i:m+1}(x)+\tfrac{i(i-1)}{(m+1)m}\,f_{i+1:m+1}(x), & x=y,\\
\tfrac{i(m-i+1)}{(m+1)m}\,f_{i,i+1:m+1}(y,x), & x>y.\end{array}\right.$$

\section{Regression of overlapping os's}\label{Sec3}
From Prop. \ref{prawd} we know that $p_{r,(i,m,k),(j,n,l)}$ are non-zero
only if $i\leq k\leq i+n+r-m$ and $j\leq l\leq j+r$. This together with (\ref{eq08}) implies
\begin{equation*}
f_{X_{i:m},X_{j:n}^{(r)}}(x,y)=\sum_{k=i}^{n+r-m+i}\,\sum_{\ell
=j}^{j+r}\,p_{r,(i,m,k),(j,n,l)}\,f_{k,\ell :n+r}(x,y).
\end{equation*}

\noindent Consequently, the conditional distribution $\P_{X_{i:m}|X_{j:n}^{(r)}=y}$ has a density with respect to $\nu_y(dx)=\mu_1(dx)+\delta_y(dx)$ of the form
$$
f_{X_{i:m}|X_{j:n}^{(r)}=y}(x)=\sum_{k=i}^{n+r-m+i}\,\sum_{\ell
=j}^{j+r}\,p_{r,(i,m,k),(j,n,l)}\,f_{X_{k:n+r}|X_{\ell :n+r}=y}(x)\,\tfrac{f_{\ell:n+r}(y)}{f_{j:n}(y)}$$$$=\sum_{k=i}^{n+r-m+i}\,\sum_{\ell
=j}^{j+r}\,p_{r,(i,m,k),(j,n,l)}\,\tfrac{\ell\binom{n+r}{\ell}}{j\binom{n}{j}}\,F^{\ell -j}(y)\bar{F}
^{j+r-\ell }(y)\,f_{X_{k:n+r}|X_{\ell :n+r}=y}(x).
$$
and the conditional distribution $\P_{X_{j:n}^{(r)}|X_{i:m}=x}$  has a density with respect to $\nu_x(dy)=\mu_1(dy)+\delta_x(dy)$ of the form
$$
f_{X_{j:n}^{(r)}|X_{i:m}=x}(y)=\sum_{k=i}^{n+r-m+i}\,\sum_{\ell
=j}^{j+r}\,p_{r,(i,m,k),(j,n,l)}\,f_{X_{\ell :n+r}|X_{k:n+r}=x}(y)\,\tfrac{f_{k:n+r}(x)}{f_{i:m}(x)}$$$$=\sum_{k=i}^{n+r-m+i}\,\sum_{\ell
=j}^{j+r}\,p_{r,(i,m,k),(j,n,l)}\,\tfrac{k\binom{n+r}{k}}{i\binom{m}{i}}\,F^{k -i}(x)\bar{F}
^{n+r-m-k+i }(x)\,f_{X_{\ell :n+r}|X_{k:n+r}=x}(y).
$$

Therefore,
\begin{equation}\label{ce} \E(X_{i:m}|X_{j:n}^{(r)}=y)=\sum_{k=i}^{n+r-m+i}\,\sum_{\ell =j}^{j+r}\,p_{r,(i,m,k),(j,n,l)}\,
\tfrac{\ell\binom{n+r}{\ell}}{j\binom{n}{j}}\,F^{\ell -j}(y)\bar{F}
^{j+r-\ell }(y)\,\mathbb{E}(X_{k:n+r}|X_{\ell :n+r}=y)
\end{equation}
and
\begin{equation}\label{ec}
 \E(X_{j:n}^{(r)}|X_{i:m}=x)=\sum_{k=i}^{n+r-m+i}\,\sum_{\ell =j}^{j+r}\,p_{r,(i,m,k),(j,n,l)}\,
\,\tfrac{k\binom{n+r}{k}}{i\binom{m}{i}}\,F^{k -i}(x)\bar{F}
^{n+r-m-k+i }(x)\,\mathbb{E}(X_{\ell :n+r}|X_{k:n+r}=x).
\end{equation}

 That is, both regressions we are interested in are represented through rather complicated expressions \eqref{ce} and \eqref{ec}. Thus  characterizations or identifiability questions for  parent distributions through the form of $\mathbb{E}(X_{i:m}|X_{j:n}^{(r)})$  or $\mathbb{E}(X_{j:n}^{(r)}|X_{i:m})$ seems to be a difficult task in such a general framework. Therefore we will concentrate rather on the special cases of $r=0$ (when $m<n$) distinguishing two quite different subcases: in Subsection \ref{mn} we will consider characterizations by linearity of $\mathbb{E}(X_{i:m}|X_{j:n})$ while in Subsection \ref{nm} we will study identification through $\mathbb{E}(X_{j:n}|X_{i:m})$. For $r>0$ we will consider only the simplest case of $r=1$ and $m=n=2$ in Subsection \ref{mnr} below.

 \subsection{Identifiability through regression functions when $r=1$ and $m=n=2$}\label{mnr}
 Here we only consider the simplest case of os's from overlapping samples $(X_{1},X_{2})$ and $(X_{2},X_{3})$, that is the case of
$r=1$, $m=n=2$. Then \eqref{ce} gives
\begin{enumerate}
\item[(i)] $\mathbb{E}(X_{2:2}|X_{2:2}^{(1)}=y)=\tfrac{y}{2}F(y)+\int_{y}^{
\infty }\,xf(x)\,dx+\tfrac{1}{F(y)}\,\int_{-\infty }^{y}\,xF(x)f(x)\,dx,$

\item[(ii)] $\mathbb{E}(X_{1:2}|X_{1:2}^{(1)}=y)=\tfrac{y}{2}\bar{F}
(y)+\int_{-\infty }^{y}\,xf(x)\,dx+\tfrac{1}{\bar{F}(y)}\,\int_{y}^{\infty
}\,x\bar{F}(x)f(x)\,dx,$

\item[(iii)] $\mathbb{E}(X_{1:2}|X_{2:2}^{(1)}=y)=\tfrac{y}{2}\bar{F}(y)+\tfrac{
1}{2F(y)}\,\int_{-\infty }^{y}\,xf(x)\,dx+\int_{-\infty }^{y}\,xf(x)\,dx-
\tfrac{1}{F(y)}\,\int_{-\infty }^{y}\,xF(x)f(x)\,dx,$

\item[(iv)] $\mathbb{E}(X_{2:2}|X_{1:2}^{(1)}=y)=\tfrac{y}{2}F(y)+\tfrac{1}{2
\bar{F}(y)}\,\int_{y}^{\infty }\,xf(x)\,dx+\int_{y}^{\infty }\,xf(x)\,dx-
\tfrac{1}{\bar{F}(y)}\,\int_{y}^{\infty }\,x\bar{F}(x)f(x)\,dx.$
\end{enumerate}

We will show that each of these four regressions
determines uniquely the parent distribution. Note  that for $Y_{i}=-X_{i}$ and $u=-y$ we have $\mathbb{E}
(X_{1:2}|X_{1:2}^{(1)}=y)=-\mathbb{E}(Y_{2:2}|Y_{2:2}^{(1)}=u)$ and $\mathbb{E}
(X_{1:2}|X_{2:2}^{(1)}=y)=-\mathbb{E}(Y_{2:2}|Y_{1:2}^{(1)}=u)$. Consequently, (i) and (ii) as well as (iii) and (iv) above are
equivalent.

\begin{theorem}
\label{2222} Let the parent distribution be absolutely continuous distribution with the
interval support $(a,b)$. Then regression function $\mathbb{E}
(X_{2:2}|X_{2:2}^{(1)}=y)$ (alternatively, $\mathbb{E}(X_{1:2}|X_{1:2}^{(1)}=y)$), $y\in(a,b)$,
determines uniquely the distribution of $X$ if $(a,b)\subsetneq {\mathbb{R}}$. If $(a,b)={\mathbb{R}}$ it determines the distribution up to a shift.
\end{theorem}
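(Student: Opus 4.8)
The plan is to read the displayed formula for $\E(X_{2:2}|X_{2:2}^{(1)}=y)$ as an integral equation for the unknown cdf $F$ and to strip it down to something invertible. First I would simplify the three integrals: writing $\mu=\E X$ and integrating $\int_{-\infty}^y xF f$ and $\int_y^\infty xf$ by parts (this is where finiteness of the first moment and the vanishing of boundary terms such as $xF(x)^2$ must be checked), the regression collapses to the transparent identity
\begin{equation*}
g(y):=\E(X_{2:2}|X_{2:2}^{(1)}=y)=\mu+\int_a^y F(s)\,ds-\frac{1}{2F(y)}\int_a^y F(s)^2\,ds,\qquad y\in(a,b).
\end{equation*}
Letting $y\to a^+$ gives $g(a^+)=\mu$, and differentiating removes $g$ itself from the right-hand side, leaving $g'(y)=\tfrac12 F(y)+\tfrac{f(y)}{2F(y)^2}\int_a^y F(s)^2\,ds$. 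Feeding the identity back into this relation eliminates the integral and yields a closed first-order system for $F$ and $\Phi(y):=\int_a^y F$, namely $\Phi'=F$ and $F'=F\,(g'-\tfrac12 F)/(\mu+\Phi-g)$, whose denominator equals $\tfrac1{2F}\int_a^y F^2>0$ on $(a,b)$.

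Since the abstract's slogan is that the quantile density is the natural object here, I would also record the equivalent quantile form, because $q$ is insensitive to translation. With $u=F(y)$, $Q=F^{-1}$ and $q=Q'=1/(f\circ Q)$, the substitution $\int_a^y F^2\,ds=\int_0^u v^2 q(v)\,dv$ recasts the differentiated relation as
\begin{equation*}
2u^2 q(u)\left(g'(Q(u))-\tfrac{u}{2}\right)=\int_0^u v^2 q(v)\,dv,\qquad u\in(0,1).
\end{equation*}
The coefficient is positive, since $g'(y)-\tfrac12 F(y)=\tfrac{f(y)}{2F(y)^2}\int_a^y F(s)^2\,ds>0$, equivalently $g'(Q(u))-\tfrac u2>0$; this strict positivity is the feature that will force uniqueness.

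For the uniqueness statement I would compare two candidates. Suppose $F_1,F_2$ produce the same $g$ on the same support and put $D(u)=Q_1(u)-Q_2(u)$. Subtracting the quantile identities $g(Q_i(u))=\tfrac u2 Q_i+\int_u^1 Q_i+\tfrac1u\int_0^u vQ_i$ (the rewrite of the first display) and applying the mean value theorem $g(Q_1)-g(Q_2)=g'(\xi(u))D(u)$ gives the homogeneous linear equation
\begin{equation*}
\left(g'(\xi(u))-\tfrac{u}{2}\right)D(u)=\int_u^1 D(v)\,dv+\frac1u\int_0^u vD(v)\,dv,
\end{equation*}
with $\xi(u)$ between $Q_1(u)$ and $Q_2(u)$, so that $g'(\xi(u))-\tfrac u2>0$ by the positivity just noted. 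Equivalently, at the level of the first-order system, $F_1$ and $F_2$ solve the same initial value problem, and the left endpoint supplies the anchoring datum: if $(a,b)\subsetneq\R$ then $\lim_{u\to0^+}Q_i(u)=a$ is finite and common, so $D(0^+)=0$ and a continuation/Gronwall argument propagates $D\equiv0$, i.e. $F_1=F_2$. If $(a,b)=\R$ the initial point recedes to $-\infty$, the anchoring datum is lost, and the reconstruction admits the one-parameter translation family of admissible solutions, so $g$ pins $F$ down only up to a shift.

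The step I expect to be the genuine obstacle is the analysis at the (possibly infinite) lower endpoint. There the first-order system is singular — both the numerator $F(g'-\tfrac12 F)$ and the denominator $\mu+\Phi-g$ vanish as $y\to a$ — so the argument that initialises the comparison and that separates the true solution from the translation family requires a careful local expansion rather than an off-the-shelf uniqueness theorem; this is also the point at which the integrability and moment conditions underlying the earlier integrations by parts must be justified. Once the endpoint behaviour is under control, the strict positivity of $g'(Q(u))-\tfrac u2$ makes the remainder of the uniqueness argument routine.
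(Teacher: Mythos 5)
Your reductions are correct, and they coincide with the paper's own starting point: your identity $g(y)=\mu+\int_a^yF(s)\,ds-\tfrac{1}{2F(y)}\int_a^yF^2(s)\,ds$ is exactly the paper's $K(y)=y+\int_y^b\bar F(x)\,dx-\tfrac{1}{2F(y)}\int_a^yF^2(x)\,dx$ (note $\mu+\int_a^yF=y+\int_y^b\bar F$), and your quantile rewrite, the positivity $g'(y)-\tfrac12F(y)>0$, and the mean-value comparison of $g(Q_1(u))$ with $g(Q_2(u))$ are all sound. The genuine gap is that the uniqueness step is never actually carried out. After you reach
\begin{equation*}
\bigl(g'(\xi(u))-\tfrac{u}{2}\bigr)D(u)=\int_u^1D(v)\,dv+\frac1u\int_0^uvD(v)\,dv,
\end{equation*}
the appeal to ``a continuation/Gronwall argument'' fails for three concrete reasons: (a) the equation is of Fredholm, not Volterra, type --- the right-hand side looks forward ($\int_u^1D$) and backward ($\int_0^u vD$) at the same time, so there is no direction in which a zero initial datum can be propagated; (b) the coefficient $g'(\xi(u))-u/2$, though strictly positive, has no positive lower bound as $u\to0^+$ (at interior points it equals $\tfrac{f}{2F^2}\int_a^{\cdot}F^2$, which degenerates at the left endpoint), so dividing by it yields no usable bound on $|D|$; (c) because the coefficient contains the unknown intermediate point $\xi(u)$, the equation cannot be differentiated into a closed ODE for $D$. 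You yourself flag the singular endpoint as ``the genuine obstacle,'' and it is: strict positivity of the coefficient is not what forces uniqueness, and the remainder is not routine. Moreover, in the case $(a,b)=\R$ the inference ``the anchoring datum is lost, hence $F$ is determined only up to a shift'' is a non sequitur: losing the anchor disables your method, but the stated conclusion requires a proof that any two solutions differ by a translation.

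The paper's proof avoids all of these difficulties by comparing the two candidate distributions at equal quantile levels rather than at equal values of $y$ (precisely the passage your mean-value device was designed to scrutinize; the paper takes it in a single step). It sets $H=(Q_F-Q_G)'$ and obtains the homogeneous relation $2t\int_t^1(1-w)H(w)\,dw=\int_0^tw^2H(w)\,dw$, whose coefficients are explicit polynomials with no nuisance term $\xi(u)$. Differentiating twice gives $4(1-t)H(t)+t(2-t)H'(t)=0$, hence $H(t)=c/\bigl(t^2(2-t)^2\bigr)$; substituting back, the left side is smaller than $2(1-t)$ while the right side exceeds $t/4$, which is incompatible for $t$ near $1$ unless $c=0$. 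Thus $Q_F-Q_G$ is a constant $B$, and both support cases follow at once: $B=0$ whenever an endpoint of $(a,b)$ is finite, while for $(a,b)=\R$ one gets $G(y)=F(y+B)$, i.e.\ uniqueness up to a shift --- with no endpoint expansion, singular initial-value analysis, or Gronwall inequality anywhere. The instructive trade-off is this: your comparison at equal $y$ is more scrupulous about what the hypothesis literally says, but it produces an equation with an unknown coefficient that you cannot solve; the paper's comparison at equal quantile levels is what makes the difference equation explicit and hence solvable. To complete your route you would either have to justify that passage or supply the missing singular-endpoint uniqueness argument; as written, the proposal is a program rather than a proof.
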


\begin{proof}
It suffices to consider only the case of $\mathbb{E}(X_{2:2}|X_{2:2}^{(1)})$.

Let us denote $K(y)=\mathbb{E}(X_{2:2}|X_{2:2}^{(1)}=y)$, $y\in (a,b)$.
Performing integration by parts in (i)  we get
\begin{equation*}
K(y)=y+\int_{y}^{b}\,\bar{F}(x)\,dx-\tfrac{1}{2F(y)}\int_{a}^{y}\,F^{2}(x)\,dx.
\end{equation*}
Consequently, if $K(y)$ is the same for two distribution functions $F$ and $G
$, which are strictly increasing on $(a,b)$ and thus have differentiable
quantile functions $Q_F$ and $Q_G$, then
\begin{equation*}
2t\int_{t}^{1}\,(1-w)\,dQ_F(w)-\int_{0}^{t}\,w^{2}\,dQ_F(w)=2t
\int_{t}^{1}\,(1-w)\,dQ_G(w)-\int_{0}^{t}\,w^{2}\,dQ_G(w).
\end{equation*}
For $H:=(Q_F-Q_G)^{\prime }$  we obtain
\begin{equation}
L:=2t\int_{t}^{1}\,(1-w)\,H(w)\,dw=\int_{0}^{t}\,w^{2}\,H(w)\,dw=:R,\qquad t\in
(0,1).  \label{2tH}
\end{equation}
Differentiating \eqref{2tH} with respect to $t$ twice we obtain
\begin{equation*}
4(1-t)H(t)+t(2-t)H^{\prime }(t)=0,\qquad t\in (0,1).
\end{equation*}
Consequently, there exists a constant $c$ such that
\begin{equation*}
H(t)=\tfrac{c}{(2-t)^{2}t^{2}},\qquad t\in (0,1).
\end{equation*}
Now, assuming that $c\neq 0$ we plug $H$ back into \eqref{2tH}. After
canceling $c$ at the left hand side we get
\begin{equation*}
L=2t\int_{t}^{1}\,\tfrac{1-w}{w^{2}(2-w)^{2}}\,dw<2t\int_{t}^{1}\,w^{-2}
\,dw=2(1-t),
\end{equation*}
while at the right hand side we have
\begin{equation*}
R=\int_{0}^{t}\,(2-w)^{-2}\,dw>\frac{t}{4}.
\end{equation*}
Therefore, for any $t\in (0,1)$ we have $t<8(1-t)$,
which is impossible for $t$ sufficiently close to 1. Consequently, $c=0$,
and thus $Q_F-Q_G=B$ for some constant $B$. It implies that either $B=0
$ or $a=-\infty $ and $b=\infty $. In the latter case for any $y\in {\mathbb{%
R}}$ there exists unique $x\in {\mathbb{R}}$ such that $y=Q_G(F(x))$.
Therefore
\begin{equation*}
G(y)=F(x)=F(Q_F(F(x)))=F(Q_G(F(x))+B)=F(y+B).
\end{equation*}
This completes the proof.
\end{proof}

\begin{theorem}
\label{2223} Let $X$ has absolutely continuous distribution with the
interval support $(a,b)$. Then the regression function $\mathbb{E}
(X_{1:2}|X_{2:2}^{(1)}=y)$ (alternatively, $\mathbb{E}(X_{2:2}|X_{1:2}^{(1)}=y)$, $y\in(a,b)$,
determines uniquely the distribution of $X$ if $(a,b)\subsetneq {\mathbb{R}}$. If $(a,b)={\mathbb{R}}$ it determines the distribution up to a shift.
\end{theorem}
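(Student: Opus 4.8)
The plan is to follow the route of the proof of Theorem \ref{2222}, using the duality already recorded above: since $\mathbb{E}(X_{1:2}|X_{2:2}^{(1)}=y)=-\mathbb{E}(Y_{2:2}|Y_{1:2}^{(1)}=u)$ for $Y_i=-X_i$, $u=-y$, the regressions (iii) and (iv) are equivalent, so it suffices to treat $K(y):=\mathbb{E}(X_{1:2}|X_{2:2}^{(1)}=y)$. First I would simplify the closed form (iii) by integrating its integral terms by parts, via $\int_a^y xf\,dx=yF(y)-\int_a^y F\,dx$ and $\int_a^y xFf\,dx=\tfrac{y}{2}F^2(y)-\tfrac12\int_a^y F^2\,dx$. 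After cancellation of all terms linear in $y$ this should collapse to the clean form
\[
K(y)=y-\int_a^y F(x)\,dx-\frac{1}{2F(y)}\int_a^y F(x)\bar F(x)\,dx .
\]

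Next, mimicking Theorem \ref{2222}, I would suppose that two absolutely continuous laws $F$ and $G$ with common support $(a,b)$, strictly increasing there and with differentiable quantile functions $Q_F,Q_G$, produce the same regression $K$. Passing to quantile coordinates ($t=F(y)$, $y=Q_F(t)$, so that $\int_a^y F\,dx=\int_0^t w\,dQ_F(w)$ and $\int_a^y F\bar F\,dx=\int_0^t w(1-w)\,dQ_F(w)$) and working with $g(y):=K(y)-y$ exactly as in the dual theorem, the equality of the two regressions should reduce to $g(Q_F(t))=g(Q_G(t))$. Writing $H:=(Q_F-Q_G)'$ and subtracting, I expect to arrive at the integral equation
\[
2t\int_0^t w\,H(w)\,dw+\int_0^t w(1-w)\,H(w)\,dw=0,\qquad t\in(0,1).
\]

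The endgame is then a short ODE computation. Differentiating this identity twice yields the first-order linear equation $t(1+t)H'(t)+(1+4t)H(t)=0$, whose solutions are $H(t)=c/\bigl(t(1+t)^3\bigr)$. Substituting this back into the integral equation reduces its left-hand side to the bare monomial $ct$, which vanishes identically only for $c=0$; hence $Q_F-Q_G\equiv B$ for some constant $B$. A finite endpoint of $(a,b)$ forces $B=0$ and thus $F=G$, while if $(a,b)=\mathbb{R}$ the constant is free and, exactly as at the close of Theorem \ref{2222}, gives $G(y)=F(y+B)$, i.e. determination up to a shift.

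I expect the genuine difficulty to lie in the quantile reduction of the second paragraph rather than in the ODE. Care is needed to justify that equality of the regression functions in the original variable $y$ does yield the clean single-parameter integral equation for $H$ after the change of variables, including the treatment of the boundary behaviour as $t\downarrow 0$ and $t\uparrow 1$, where the factor $1/F(y)$ and the support endpoints interact. Once that reduction is secured, the constant-killing step is actually cleaner here than in Theorem \ref{2222}: no inequality estimate is required, since the substitution of $H$ leaves precisely $ct$, forcing $c=0$ outright.
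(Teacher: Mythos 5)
Your proposal follows the paper's own proof essentially step for step: the same duality reduction, the same integration by parts (your form $K(y)=y-\int_a^y F\,dx-\tfrac{1}{2F(y)}\int_a^y F\bar F\,dx$ coincides with the paper's after combining terms), the same quantile-coordinate integral equation (your combined equation is exactly the paper's \eqref{(1+2t)H}), the same ODE $t(1+t)H'+(1+4t)H=0$ with solution $H(t)=c/(t(1+t)^3)$, and the same endgame via $Q_F-Q_G=B$. Two minor remarks: your back-substitution is actually slightly cleaner than the paper's (the left-hand side collapses to the single term $ct$, forcing $c=0$ outright, whereas the paper evaluates $L$ and $R$ separately and notes $L\neq R$), and the quantile-reduction step you single out as the genuine difficulty is precisely the step the paper itself performs without further justification.
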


\begin{proof}
Again we consider only the case of $\mathbb{E}(X_{1:2}|X_{2:2}^{(1)})$.

Let us denote $K(y)=\mathbb{E}(X_{1:2}|X_{2:2}^{(1)}=y)$, $y\in (a,b)$.
Performing integration by parts in (iii) we get
\begin{equation*}
K(y)=y-\left( \tfrac{1}{2F(y)}+1\right) \,\int_{a}^{y}\,F(x)\,dx+\tfrac{1}{
2F(y)}\int_{a}^{y}\,F^{2}(x)\,dx.
\end{equation*}
Consequently, if $K(y)$ is the same for two distribution functions $F$ and $G
$, which are strictly increasing on $(a,b)$ and thus have differentiable
inverses $Q_F$ and $Q_G$, then from the above formula we get
\begin{equation*}
(1+2t)\int_{0}^{t}\,w\,dQ_F(w)-\int_{0}^{t}\,w^{2}\,dQ_F(w)=(1+2t)
\int_{0}^{t}\,w\,dQ_G(w)-\int_{0}^{t}\,w^{2}\,dQ_G(w).
\end{equation*}
As in the proof above we denote $H=(Q_F-Q_G)^{\prime }$. Then we have
\begin{equation}
L:=(1+2t)\int_{0}^{t}\,w\,H(w)\,dw=\int_{0}^{t}\,w^{2}\,H(w)\,dw=:R,\qquad t\in
(0,1).  \label{(1+2t)H}
\end{equation}
Differentiating \eqref{(1+2t)H} twice with respect to $t$ we obtain
\begin{equation*}
(1+4t)H(t)+t(1+t)H^{\prime }(t)=0,\qquad t\in (0,1).
\end{equation*}
Consequently, there exists a constant $c$ such that
\begin{equation*}
H(t)=\tfrac{c}{t(1+t)^{3}},\qquad t\in (0,1).
\end{equation*}
Now, assuming that $c\neq 0$ we plug $H$ back into \eqref{(1+2t)H}. After
canceling $c$ at the left hand side we have
\begin{equation*}
L=(1+2t)\int_{0}^{t}\,(1+w)^{-3}\,dw=\tfrac{1+2t}{2}\left( 1-\tfrac{1}{
(1+t)^{2}}\right) =\tfrac{(1+2t)((1+t)^{2}-1)}{2(1+t)^{2}}
\end{equation*}
while at the right hand side we have
\begin{equation*}
R=\int_{0}^{t}\,\tfrac{w}{(1+w)^{3}}\,dw=\tfrac{t^{2}}{2(1+t)^{2}}.
\end{equation*}
Obviously, $L\neq R$. Consequently, $c=0$, and thus $Q_F-Q_G=B$ for
some constant $B$.
To complete the proof we proceed as in the end of the proof of the previous theorem.
\end{proof}

\subsection{Linearity of regression of $X_{i:m}$ given $X_{j:n}$, $m<n$}\label{mn}

In this subsection we consider linearity of regression as given in \eqref{eq03} when $i<j<n-m+i$ since, as mentioned before, the cases $j\le i$ and $j\ge n-m+i$ have already been discussed in DW.

It seems that the only results available  in the literature are for $m=i=1$, i.e. with $X_{i:m}=X_1$. In particular, the case $n=2k+1$, $j=k+1$ was considered in Weso\l owski and Gupta (2001) (referred to by WG in the sequel), see also Nagaraja and Nevzorov (1997). In WG it was shown  that if $\E\,X_{1}=0$ the relation
\begin{equation}
\mathbb{E}(X_{1:1}|X_{k+1:2k+1})=aX_{k+1:2k+1},
\label{eq04}
\end{equation}
implies $a\ge \tfrac{k+1}{2k+1}$ and up to a scaling factor uniquely determines the parent distribution. Examples of such distributions (up to scale factors) are:
\begin{itemize}
\item  The uniform distribution on $[-1,1]$ when $a=\tfrac{k+1}{2k+1}$.
\item The Student $t_2$ distribution when $a=1$. In fact, the same characterization of the Student $t_2$ distribution, sometimes with a different phrasing, e.g. writing $\tfrac{1}{n}\sum_{j=1}^{n}\,X_{j}$ instead of $X_{1:1}=X_{1}$, under the conditional
expectation in \eqref{eq04} with $a=1$ is given in Nevzorov (2002) and Nevzorov, Balakrishnan and Ahsanullah (2003). (These references apparently missed the result of WG.)

\item The distribution with the cdf $F(x)=\tfrac{1}{2}\left(1+\tfrac{\sqrt{2}x}{\sqrt{\sqrt{4+x^{4}}+x^{2}}}\right)
$, $x\in {\mathbb{R}}$, when $a=\tfrac{4k+3}{3(2k+1)}.$
\end{itemize}

Related regression characterizations can be found also e.g. in  Balakrishnan and Akhundov (2003), Akhundov, Balakrishnan and Nevzorov (2004) (referred to by ABN in the sequel), Nevzorova, Nevzorov and Akhundov (2007), Marudova and Nevzorov (2009), Akhundov and Nevzorov (2012), Yanev and Ahsanullah (2012).

For describing our results in the sequel (as well as results from the literature) it is convenient to introduce the family of complementary beta distributions  defined in Jones (2002) for a restricted range of parameters $\alpha, \beta$ and then extended to any $\alpha,\beta\in\R$ in Jones (2007). Slightly changing the original Jones' formulation we say that an absolutely continuous distribution with distribution function $F$ and density $f$ belongs to the family of complementary beta distributions  $CB(\alpha,\beta)$, $\alpha,\beta\in\R$, if
$$
F^{\alpha}(x)\,\bar{F}^{\beta}(x)\propto f(x),\quad x\in \mathbb{R}.
$$
Kamps (1991) introduced $CB(-p,1+p-q)$ for integer $p$ and $q$ in the context of characterization of distributions by
recurrence relations between moments of os's from the original and
extended samples. Nevzorov, Balakrishnan and Ahsanullah (2003) observed that $CB(\alpha ,\alpha )$ includes several interesting
special cases: Student $t_{2}$ distribution for $\alpha =3/2$, logistic
distribution for $\alpha =1$, squared sine distribution with $F(x)=\sin
^{2}(x)I_{[0,\pi /2]}(x)$ for $\alpha=1/2$. Extensive discussion of properties of $CB(\alpha,\beta)$ family is given in Jones (2007). In particular, it is observed there that if $\alpha=0$ then $\beta=1$ gives the exponential distribution, $\beta>1$ gives the Pareto laws and $\beta<1$ are power distributions on a bounded interval.

The family $CB(\alpha,\beta)$ is a subclass of the family  $GS(\alpha,\beta,\gamma)$, the latter being defined through the equation $$F^{\alpha}(x)(1-F^{\gamma}(x))^{\beta}\propto f(x),$$ was introduced in Mui\~no, Voit and Sorribas (2006), i.e. $CB(\alpha,\beta)=GS(\alpha,\beta,1)$. Another subclass of $GS$ distributions  was characterized by linearity of regression of sum of $X_{k-j:k-j}$ and $X_{k+r:k+r}$ given $X_{k:k}$ in Marudova and Nevzorov (2009). Basic properties of os's from a sample with the parent distribution belonging to  the GS family were studied in Mohie El-Din, Amein and Hamedani (2012). In a recent paper Hu and Lin (2018) considered an extension of $GS(1,2,\gamma)$ class defined by the equation $$F(x)(1-F^{\gamma}(x))\propto x^af(x),$$ for $\gamma>0$ and $a\in[0,1]$ in the context of characterization by random exponential shifts of os's. All these equations can be treated as generalized versions of the logistic growth model or even as a  more flexible growth model introduced in Richards (1959).

To describe results in this section it will be also convenient to use the quantile function $Q$ and quantile density function $q$. If $f$ is a  strictly positive density on some (possibly unbounded) interval $(a,b)$ then  the  respective  distribution function $F$ is invertible on $(a,b)$ and thus its inverse, quantile function $Q$, is well defined on $(0,1)$. Moreover it is absolutely continuous with respect to the Lebesgue measure on $(0,1)$, that is $Q(y)=\int_{y_0}^y\,q(u)\,du$ for some $y_0\in[0,1]$. The function $q$ is called the quantile density function. Note that
\begin{equation}\label{fq}
f=T(F)\qquad \Leftrightarrow \qquad q=1/T,
\end{equation}
and thus $q$ together with $y_0$ uniquely determine $F$. In particular, the quantile density $q$ for a distribution in $CB(\alpha,\beta)$ has the form
$$
q(u)\propto u^{-\alpha}(1-u)^{-\beta},\quad u\in(0,1).
$$


\begin{remark}
Note that the regression condition
\begin{equation}
\label{11jn}
\E(X_{1:1}|X_{j:n})=aX_{j:n},
\end{equation}
has been reduced in WG to the condition $M_{\lambda}(x)=Ax$ where $A=\tfrac{na-1}{n-1}$
and $$M_{\lambda}(x):=\lambda\mathbb{E}(X|X<x)+(1-\lambda)\mathbb{E}(X|X>x)$$ with $\lambda=\tfrac{j-1}{n-1}$. In particular, formula (4) in WG says that  for a positive $A$ (necessarily, $A\ge 1/2$) condition \eqref{11jn} holds if and only if the quantile function $Q$ satisfies
\begin{equation} \label{gwq}
Q(y)=cy^{\tfrac{\lambda}{A}-1}\,
(1-y)^{\tfrac{1-\lambda}{A}
	-1}\,(\lambda-y),\quad y\in(0,1).
\end{equation}
Differentiating \eqref{gwq} we get the following expression for the quantile density
\begin{equation*}
q(y)\propto y^{\tfrac{\lambda}{A}-2}(x)(1-y)^{\tfrac{1-\lambda}{A}-2}((\lambda-A)\lambda-2\lambda(1-A)y+(1-A)y^2).
\end{equation*}
Therefore, for $A=1$
$$
q(y)\propto y^{-1-\tfrac{n-j}{n-1}}(1-y)^{-1-\tfrac{j-1}{n-1}},\qquad y\in(0,1).
$$
Since $A=1$ implies $a=1$ we conclude that $\E(X_{1:1}|X_{j:n})=X_{j:n}$ characterizes the $CB\left(1+\tfrac{n-j}{n-1},1+\tfrac{j-1}{n-1}\right)$ family. This result was independently proved in Balakrishnan and Akhundov (2003), see also Cor. 2.1 in ABN and Nevzorov (2015).
\end{remark}


ABN characterized also the family $CB(1+(1-\lambda )i,\,1+\lambda i)$ for
positive integer $i$ and $\lambda \in (0,1)$ through the condition
\begin{equation*}
\mathbb{E}(\lambda X_{i:2i+1}+(1-\lambda )X_{i+2:2i+1}|X_{i+1:2i+1})=X_{i+1:2i+1}.
\end{equation*}
This result, stated as Th. 3.1 in ABN, includes the result of Nevzorov (2002) who characterized the family $CB\left(1+\tfrac{i}{2},\,1+\tfrac{i}{2}\right)$ by the above condition with $\lambda =1/2$.

\begin{remark}
	\label{abn}
Before we proceed further with a new related characterization let us add here a small complement to Th. 3.1 of ABN (also valid for Th. 2 of Nevzorov (2002)).
The proof as given in ABN (similarly as that of the main result from Balakrishnan and Akhundov (2003)),  exploits the ideas originating from the proof of Th. 1 in Nevzorov (2002). In particular, an important part of the proof of Th. 3.1 in ABN is integration by parts in which the following two identities
\begin{equation}
\label{2lim}
\lim_{u\to-\infty}uF^i(u)=0\quad\mbox{and}\quad \lim_{u\to\infty}u\bar{F}^i(u)=0
\end{equation}
are necessary. Therefore either these conditions or suitable moments conditions, see Lem. \ref{iint} below, have to strengthen  the assumptions of Th. 3.1 of ABN. It is of some interest to note that  distributions from $CB(1+(1-\lambda)i,\,1+\lambda i)$ appearing in the statement of Theorem 3.4 given below do not possess finite expectations.
\end{remark}

The result below shows that  natural integrability assumptions are responsible for speed of convergence to zero of suitable powers of the left and right  tails of the cdf $F$. In particular, it follows that integrability of both $X_{i:2i+1}$ and $X_{i+2:2i+1}$ imply \eqref{2lim}.

\begin{lemma}\label{iint}
	If $\E|X_{k:n}|<\infty$ then
	\begin{equation}\label{limi}
	\lim_{x\to-\infty}\,xF^k(x)=0\quad and \quad\lim_{x\to\infty}x\bar{F}^{n-k+1}(x)=0.
	\end{equation}
\end{lemma}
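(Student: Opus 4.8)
The plan is to reduce both limits to a single tail statement about $F^k$, and then to exploit that $\E|X_{k:n}|<\infty$ forces the tail of a suitable nonnegative integral to vanish. Recall that $X_{k:n}$ has density $f_{k:n}(x)=\tfrac{n!}{(k-1)!(n-k)!}F^{k-1}(x)\bar F^{n-k}(x)f(x)$ with respect to $\mu_1$, so that $\E|X_{k:n}|=\int_{\R}|x|f_{k:n}(x)\,d\mu_1(x)<\infty$. First I would fix $c\in\R$ with $0<F(c)<1$; on $(-\infty,c]$ the factor $\bar F^{n-k}$ is bounded between $\bar F^{n-k}(c)>0$ and $1$, so finiteness of the expectation is equivalent to $\int_{-\infty}^c|x|F^{k-1}(x)f(x)\,dx<\infty$.

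Next, since $F^{k-1}(x)f(x)=\tfrac1k\,(F^k)'(x)$ for a.e.\ $x$, the last integral equals $\tfrac1k\int_{-\infty}^c|x|\,(F^k)'(x)\,dx$, which is therefore finite. The key observation is an elementary monotonicity bound: for $x\le c$ and every $t\le x$ one has $|t|\ge|x|$, while $(F^k)'\ge 0$ and $F^k(-\infty)=0$, whence
\[
\int_{-\infty}^{x}|t|\,(F^k)'(t)\,dt\ \ge\ |x|\int_{-\infty}^{x}(F^k)'(t)\,dt\ =\ |x|\,F^k(x)\ \ge\ 0 .
\]
Because the integrand is nonnegative and integrable on $(-\infty,c]$, the left-hand side is the tail of a convergent integral and hence tends to $0$ as $x\to-\infty$; squeezing then yields $|x|F^k(x)\to 0$, i.e.\ $\lim_{x\to-\infty}xF^k(x)=0$, which is the first assertion in \eqref{limi}.

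For the right tail I would pass to $Y=-X$. Then $Y$ is again absolutely continuous, its distribution function satisfies $F_Y(u)=\bar F(-u)$, and the order statistics transform as $Y_{n-k+1:n}=-X_{k:n}$, so that $\E|Y_{n-k+1:n}|=\E|X_{k:n}|<\infty$. Applying the first assertion to $Y$ with index $n-k+1$ gives $\lim_{u\to-\infty}u\,F_Y^{\,n-k+1}(u)=0$; substituting $u=-x$ and using $F_Y(-x)=\bar F(x)$ produces $\lim_{x\to\infty}x\,\bar F^{\,n-k+1}(x)=0$. I do not anticipate a genuine obstacle in this argument; the only points that require care are the boundedness of the auxiliary tail factor $\bar F^{n-k}$ on $(-\infty,c]$ (which is precisely what permits discarding it in the integrability statement) and the bookkeeping of indices under the reflection $X\mapsto -X$.
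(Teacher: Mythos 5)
Your proof is correct and follows essentially the same route as the paper's: both rest on bounding the tail of the convergent integral $\int_{-\infty}^{x}|t|\,F^{k-1}(t)\bar F^{n-k}(t)f(t)\,dt$ below by a constant multiple of $|x|F^k(x)$ (you discard the factor $\bar F^{n-k}$ via a cutoff $c$ where it is bounded away from $0$, while the paper keeps $\bar F^{n-k}(x)$ and uses $\bar F^{n-k}(x)\to 1$), and then letting the tail vanish. The only organizational differences are that you deduce the right-tail limit from the left-tail one by the reflection $X\mapsto -X$ with index $n-k+1$, whereas the paper simply repeats the mirrored estimate, and that your inequality $|t|\ge|x|$ for $t\le x$ tacitly requires $x\le 0$ --- harmless, since $x\to-\infty$.
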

\begin{proof}
	Note that, for $x<0$,
	$$\int_{-\infty}^{x}\,|t|\,F^{k-1}(t)\bar{F}^{n-k}(t)\,f(t) dt\ge |x|\bar{F}^{n-k}(x)\int_{-\infty}^x\,F^{k-1}(t)f(t)\,dt=\frac{1}{k}|z|\bar{F}^{n-k}(x)F^k(x).$$
	Integrability of $X_{k:n}$ implies that the left hand side above converges to 0 as $x\to-\infty$. The first limit in \eqref{limi} follows since $\lim_{x\to-\infty}\,\bar{F}^{n-k}(x)=1$.
	
	Similarly, for $x>0$,
	$$\int_x^{\infty}\,|t|\,F^{k-1}(t)\bar{F}^{n-k}(t)\,f(t) dt\ge |x|F(x)^{k-1}\int_x^{\infty}\,\bar{F}^{n-k}(t)f(t)\,dt=\frac{1}{n-k+1}|x|F^{k-1}(x)\bar{F}^{n-k+1}(x).$$
	Since $\E|X_{k:n}|<\infty$ the left hand side above converges to 0 as $x\to\infty$. Since $\lim_{x\to\infty}\,F^{k-1}(x)=1$ the second limit in \eqref{limi} follows.
	\end{proof}
Basically, we have described the state of art of the characterizations by linearity of regression  of an os from the original sample given an os from the extended sample.  In the next result we present a new contribution whose proof borrows some ideas from Nevzorov (2002).

\begin{theorem}\label{2.2}
	Let $2\le j\le n-1$. Assume that $\E|X_{j-1:n-2}|<\infty$ and that $f>0$ on some interval $(a,b)$ (possibly unbounded). If
	\begin{equation}
	\label{akuku}
	\E(X_{j-1:n-2}|X_{j:n})=X_{j:n},
	\end{equation}
	then the parent distribution has the quantile density $q$ of the form
	\begin{equation}
	\label{qdf}
	q(u)\propto \tfrac{j-1+(n-2j+1)u}{u^{1+(j-1)\lambda}\,(1-u)^{1+(n-j)(1-\lambda)}},\quad u\in(0,1),
	\end{equation}
	where $\lambda=\tfrac{j(j-1)}{(n-j+1)(n-j)+j(j-1)}$.
\end{theorem}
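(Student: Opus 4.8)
The plan is to reduce the hypothesis \eqref{akuku} to a first-order differential equation for the quantile density by means of the representation \eqref{expe1}. Specializing \eqref{expe1} to $i=j-1$, $m=n-2$, the range $k\in\{i,\dots,n-m+i\}$ collapses to the three consecutive indices $k\in\{j-1,j,j+1\}$, giving
\begin{equation*}
\E(X_{j-1:n-2}|X_{j:n})=c_{j-1}\,\E(X_{j-1:n}|X_{j:n})+c_j\,X_{j:n}+c_{j+1}\,\E(X_{j+1:n}|X_{j:n}),
\end{equation*}
with $c_{j-1}=\tfrac{(n-j+1)(n-j)}{n(n-1)}$, $c_j=\tfrac{2(j-1)(n-j)}{n(n-1)}$, $c_{j+1}=\tfrac{j(j-1)}{n(n-1)}$; these sum to one because the three numerators add to $n(n-1)$. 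Since $\E(X_{j:n}|X_{j:n})=X_{j:n}$, the hypothesis \eqref{akuku} is equivalent to
\begin{equation*}
c_{j-1}\,\E(X_{j-1:n}|X_{j:n}=y)+c_{j+1}\,\E(X_{j+1:n}|X_{j:n}=y)=(c_{j-1}+c_{j+1})\,y,
\end{equation*}
and dividing by $c_{j-1}+c_{j+1}$ one recognizes the weights $1-\lambda=\tfrac{c_{j-1}}{c_{j-1}+c_{j+1}}$ and $\lambda=\tfrac{c_{j+1}}{c_{j-1}+c_{j+1}}$, with $\lambda$ precisely as in the statement.

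Next I would pass to the quantile scale. Writing $p=j-1$, $s=n-j$ and $t=F(y)$, $y=Q(t)$, the conditional law of $X_{j-1:n}$ given $X_{j:n}=y$ is that of the maximum of $p$ i.i.d.\ uniforms on $(0,t)$ mapped through $Q$, and that of $X_{j+1:n}$ given $X_{j:n}=y$ is that of the minimum of $s$ i.i.d.\ uniforms on $(t,1)$ mapped through $Q$. This yields $\E(X_{j-1:n}|X_{j:n}=y)=p\,a(t)$ and $\E(X_{j+1:n}|X_{j:n}=y)=s\,b(t)$, where $a(t)=t^{-p}\int_0^t Q(v)v^{p-1}\,dv$ and $b(t)=(1-t)^{-s}\int_t^1 Q(w)(1-w)^{s-1}\,dw$, so the displayed relation becomes the integral equation
\begin{equation*}
(1-\lambda)p\,a(t)+\lambda s\,b(t)=Q(t),\qquad t\in(0,1).
\end{equation*}
Here the assumption $\E|X_{j-1:n-2}|<\infty$ enters: as the density of $X_{j-1:n-2}$ is proportional to $v^{p-1}(1-v)^{s-1}$, it forces $\int_0^1|Q(v)|v^{p-1}(1-v)^{s-1}\,dv<\infty$, so $a$ and $b$ are finite and the boundary contributions at $0$ and $1$ (equivalently, the limits in Lemma \ref{iint}) vanish.

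The third step is to convert this into an ODE. From $\tfrac{d}{dt}\int_0^tQv^{p-1}=Q\,t^{p-1}$ and $\tfrac{d}{dt}\int_t^1Q(1-w)^{s-1}=-Q(1-t)^{s-1}$ one obtains the identities $a'=t^{-1}(Q-pa)$ and $b'=(1-t)^{-1}(sb-Q)$. Differentiating the integral equation once and inserting these gives a second linear relation between $a$ and $b$; solving the two relations I expect the clean simplification
\begin{equation*}
a(t)=\tfrac{Q(t)}{p}-\tfrac{t(1-t)\,q(t)}{(1-\lambda)p\,N(t)},\qquad b(t)=\tfrac{Q(t)}{s}+\tfrac{t(1-t)\,q(t)}{\lambda s\,N(t)},
\end{equation*}
where $N(t)=st+p(1-t)=(j-1)+(n-2j+1)t$ is exactly the numerator appearing in \eqref{qdf}. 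Re-inserting $a$ into its defining identity $a'=t^{-1}(Q-pa)$ (the relation for $b$ then holds automatically) makes all undifferentiated $Q$-terms combine, collapsing the a priori second-order problem to the first-order linear equation
\begin{equation*}
\tfrac{q'(t)}{q(t)}=\tfrac{s-p}{N(t)}-\tfrac{1+p\lambda}{t}+\tfrac{1+s(1-\lambda)}{1-t},\qquad t\in(0,1).
\end{equation*}

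Integrating this ODE yields $q(t)\propto N(t)\,t^{-1-p\lambda}(1-t)^{-1-s(1-\lambda)}$, which is \eqref{qdf} once $p=j-1$ and $s=n-j$ are restored, the single free constant being the scale factor hidden in ``$\propto$''. I expect the main obstacle to be precisely the bookkeeping of this last step: one must verify that the two integral quantities $a,b$ can be eliminated so that the order genuinely drops from two to one, that the $Q$-terms cancel, and that the exponents produced by the elimination match $1+p\lambda$ and $1+s(1-\lambda)$. A secondary technical point is the regularity bootstrap, namely that the integral equation itself already forces $Q$, and hence $q=Q'$, to be differentiable, so that the successive differentiations are legitimate.
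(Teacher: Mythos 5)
Your proposal is correct, and I verified the formulas you flagged as ``expected'': the solved expressions for $a$ and $b$, the cancellation of the undifferentiated $Q$-terms, the first-order ODE for $q'/q$, and its integral all hold exactly as written, and they reproduce \eqref{qdf}. Your opening step coincides with the paper's: both use \eqref{expe1} with $i=j-1$, $m=n-2$ to collapse the hypothesis to $(1-\lambda)\,\E(X_{j-1:n}|X_{j:n})+\lambda\,\E(X_{j+1:n}|X_{j:n})=X_{j:n}$ with the same $\lambda$. After that the routes genuinely diverge. The paper stays on the real-line scale: it integrates by parts (invoking Lem.~\ref{iint} to kill the boundary terms, which is where $\E|X_{j-1:n-2}|<\infty$ enters), introduces $G(x)=\int_{-\infty}^x F^{j-1}$ and $H(x)=\int_x^\infty \bar F^{n-j}$, recognizes the hypothesis as the separable relation \eqref{de}, extracts the multiplicative first integral $GH^{(1-\lambda)/\lambda}=K$, deduces $H\propto F^{-(j-1)\lambda}\bar F^{(n-j)\lambda}$, differentiates to get $f$ as a function of $F$, and only at the end converts to the quantile density via \eqref{fq}. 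You instead move to the quantile scale at once, represent the two conditional expectations as beta-weighted integrals of $Q$ (no integration by parts, so no tail-limit lemma; integrability enters only as finiteness of $\int_0^1 |Q(v)|v^{p-1}(1-v)^{s-1}dv$), eliminate the two integral quantities through a $2\times 2$ linear system, and integrate a linear first-order ODE for $q$ directly. The paper's route buys brevity—the first integral does in one line what your elimination does in several—while yours buys a trick-free, mechanical derivation that bypasses Lem.~\ref{iint} and lands directly on the quantile form in which the theorem is stated. The regularity bootstrap you mention does close as you expect: $f>0$ makes $Q$ continuous, so both sides of your integral equation force $Q\in C^1$ by the fundamental theorem of calculus, and then the solved expression for $a$ together with $a\in C^1$ forces $q\in C^1$, legitimizing the second differentiation.
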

\begin{proof}
	Due to \eqref{expe1} we can write
	$$
	 \E(X_{j-1:n-2}|X_{j:n})=\tfrac{(n-j+1)(n-j)}{n(n-1)}\E(X_{j-1:n}|X_{j:n})+\tfrac{2(n-j)(j-1)}{n(n-1)}X_{j:n}+\tfrac{j(j-1)}{n(n-1)}\E(X_{j+1:n}|X_{j:n}).
	$$
	Combining the above equation with \eqref{akuku} we obtain
	\begin{equation}
	\label{akuk}
	(1-\lambda)\E(X_{j-1:n}|X_{j:n})+\lambda\E(X_{j+1:n}|X_{j:n})=X_{j:n}.
	\end{equation}
	
	Note that
	$$
	\E(X_{j-1:n}|X_{j:n}=x)=\int_{-\infty}^x\,t\tfrac{dF^{j-1}(t)}{F^{j-1}(x)}.
	$$
	Now, Lem. \ref{iint} implies $\lim_{x\to-\infty}\,xF^{j-1}(x)=0$ and thus integration by parts gives
	$$
	\E(X_{j-1:n}|X_{j:n}=x)=x-\tfrac{\int_{-\infty}^{x}\,F^{j-1}(t)\,dt}{F^{j-1}(x)}.
	$$
	On the other hand,
	$$
    \E(X_{j+1:n}|X_{j:n}=x)=-\int_x^{\infty}\,t\,\tfrac{d\bar{F}^{n-j}(t)}{\bar{F}^{n-j}(x)}.
	$$
	Lem. \ref{iint} implies $\lim_{x\to\infty}\,x\bar{F}^{n-j}(x)=0$ and thus integration by parts gives
	$$
	 \E(X_{j+1:n}|X_{j:n}=x)=x+\tfrac{\int_x^{\infty}\,\bar{F}^{n-j}(t)\,dt}{\bar{F}^{n-j}(x)}.
	$$
	
	Consequently, \eqref{akuk} assumes the form
	$$
	 \tfrac{\int_{-\infty}^{x}\,F^{j-1}(t)\,dt}{F^{j-1}(x)}=\tfrac{\lambda}{1-\lambda}\tfrac{\int_x^{\infty}\,\bar{F}^{n-j}(t)\,dt}{\bar{F}^{n-j}(x)}.
	$$
	
	Let us introduce two functions $G$ and $H$ defined as follows: $G(x)=\int_{-\infty}^{x}\,F^{j-1}(t)\,dt$ and $H(x)=\int_x^{\infty}\,\bar{F}^{n-j}(t)\,dt$. Consequently, the above equation can be written as
		\begin{equation}\label{de}
		\tfrac{G'}{G}=-\tfrac{1-\lambda}{\lambda}\tfrac{H'}{H}.
		\end{equation}
Upon integration we get $GH^{\tfrac{1-\lambda}{\lambda}}=K$ for some constant $K$. Then after multiplication of \eqref{de} by $GH^{1/\lambda}$ we get
$$
H^{1/\lambda}
=
\displaystyle
-\frac{1-\lambda}{\lambda}
\left( GH^{\frac{1-\lambda}{\lambda}} \right)\displaystyle\frac{H'}{G'}
=
K\frac{1-\lambda}{\lambda}
\frac{\overline{F}^{n-j}}{F^{j-1}}
$$
and thus
$$
H\propto F^{-(j-1)\lambda}\bar{F}^{(n-j)\lambda}.
$$
Now we differentiate the above equation and obtain
$$
f\propto \tfrac{F^{1+(j-1)\lambda}\,\bar{F}^{1+(n-j)(1-\lambda)}}{j-1+(n-2j+1)F}.
$$
Thus the final result follows from \eqref{fq}.
\end{proof}

Now we will present two corollaries of the above result which are closely connected to regression characterizations considered in literature.

\begin{corollary} Assume that $\E|X_{i:2i-1}|<\infty$. If
\begin{equation}
\mathbb{E}(X_{i:2i-1}|X_{i+1:2i+1})=X_{i+1:2i+1},
\label{exGW}
\end{equation}
then the parent distribution is $CB\left(1+\tfrac{i}{2},\,1+\tfrac{i}{2}\right)$.
\end{corollary}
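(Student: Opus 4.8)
The plan is to obtain the Corollary as a direct specialization of Theorem \ref{2.2}, with no new analytic work required. First I would match the regression identity \eqref{exGW} against the hypothesis \eqref{akuku}: reading $X_{j-1:n-2}=X_{i:2i-1}$ and $X_{j:n}=X_{i+1:2i+1}$ forces $n-2=2i-1$ and $j-1=i$, that is
\[
n=2i+1,\qquad j=i+1.
\]
These values satisfy the standing constraint $2\le j\le n-1$ for every $i\ge 1$, and the integrability hypothesis $\E|X_{j-1:n-2}|<\infty$ of Theorem \ref{2.2} is literally the assumed $\E|X_{i:2i-1}|<\infty$. Hence Theorem \ref{2.2} applies (under its standing assumption that the density is positive on an interval) and delivers the quantile density \eqref{qdf} for this configuration.

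Next I would compute the constant $\lambda$. With $j=i+1$ and $n=2i+1$ one has $j(j-1)=i(i+1)$ and $(n-j+1)(n-j)=(i+1)i$, so
\[
\lambda=\frac{i(i+1)}{i(i+1)+i(i+1)}=\tfrac12 .
\]
The key simplification then comes from the numerator of \eqref{qdf}: here $n-2j+1=(2i+1)-2(i+1)+1=0$, so the factor $j-1+(n-2j+1)u$ collapses to the nonzero constant $j-1=i$ and drops out of the proportionality.

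Finally I would read off the exponents. Since $\lambda=\tfrac12$, the two exponents in \eqref{qdf} coincide,
\[
1+(j-1)\lambda=1+\tfrac i2,\qquad 1+(n-j)(1-\lambda)=1+\tfrac i2,
\]
whence $q(u)\propto u^{-(1+i/2)}(1-u)^{-(1+i/2)}$. This is exactly the quantile density $q(u)\propto u^{-\alpha}(1-u)^{-\beta}$ recorded earlier for the $CB(\alpha,\beta)$ family with $\alpha=\beta=1+\tfrac i2$; since $q$ together with a base point determines $F$ (and $CB$ is a location--scale family), the parent distribution is $CB\left(1+\tfrac i2,\,1+\tfrac i2\right)$, as claimed. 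I do not expect any real obstacle: the entire content is the bookkeeping that $n-2j+1$ vanishes and that $\lambda=\tfrac12$ at this configuration, which is precisely what makes the numerator constant and forces the two $CB$ parameters to be equal. The only point worth flagging is that one must invoke Theorem \ref{2.2} under its positive-density assumption, so that the quantile density is well defined and the identification through $q$ is legitimate.
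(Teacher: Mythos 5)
Your proof is correct and is essentially the paper's own argument: the paper likewise obtains the corollary directly from Theorem \ref{2.2} with $j=i+1$, $n=2i+1$, noting $\lambda=\tfrac12$. Your additional bookkeeping (checking $2\le j\le n-1$, observing $n-2j+1=0$ so the numerator of \eqref{qdf} is constant, and matching $q(u)\propto u^{-(1+i/2)}(1-u)^{-(1+i/2)}$ to the $CB$ quantile density) simply makes explicit what the paper leaves to the reader.
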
\begin{proof} It follows directly from Th. \ref{2.2} by taking there $j=i+1$ and $n=2i+1$. Note that then $\lambda=\tfrac{1}{2}$. \end{proof}

The above corollary is also a consequence of a characterization of  $CB(1+(1-\lambda )i,\,1+\lambda i)$ distribution for
positive integer $i$ and $\lambda \in (0,1)$ through the condition
\begin{equation}\label{abn2}
\mathbb{E}(\lambda X_{i:2i+1}+(1-\lambda )X_{i+2:2i+1}|X_{i+1:2i+1})=X_{i+1:2i+1},
\end{equation}
which is stated as Th. 3.1 in ABN (it also includes the result of Nevzorov (2002) who characterized the family $CB\left(1+\tfrac{i}{2},\,1+\tfrac{i}{2}\right)$ by the above condition with $\lambda =1/2$). Note that \eqref{exGW} combined with \eqref{expe1} gives \eqref{abn2} with $\lambda=1/2$.

The second corollary is related to an open problem stated in DW.
	 In the concluding remarks of that paper, the authors suggested that possibly the easiest open questions in characterizations by linearity of regression of an os from a restricted sample with respect to an os from an extended sample are the following two cases:
	$$\E(X_{1:2}|X_{2:4})=aX_{2:4}+b,
\quad \mbox{ and }\quad \E(X_{2:2}|X_{3:4})=aX_{3:4}+b.$$
	Actually each of these two conditions was written in DW in the expanded integral form. Unfortunately there are misprints in those formulas: ``y" is missing under all integrals and in the second equation the coefficients of two integrals should be: 1/3 instead of 1/6 for the first integral and 1/2 instead of 1/3 for the second.
	
	We are able to solve these problems only when $a=1$ and $b=0$.
	\begin{corollary}
		$\,$
	\begin{enumerate}
		\item If $\E|X_{1:2}|<\infty$ and $\E(X_{1:2}|X_{2:4})=X_{2:4}$,
 then $q(u)\propto \tfrac{1+u}{u^{5/4}(1-u)^{5/2}}$, $u\in(0,1)$.
		\item If $\E|X_{2:2}|<\infty$ and $\E(X_{2:2}|X_{3:4})=X_{3:4}$, then $q(u)\propto \tfrac{2-u}{u^{5/2}(1-u)^{5/4}}$, $u\in(0,1)$.
	\end{enumerate}
\end{corollary}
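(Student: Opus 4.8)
The plan is to obtain both parts as immediate specializations of Theorem \ref{2.2}, since each of the two regression conditions is precisely of the form $\E(X_{j-1:n-2}|X_{j:n})=X_{j:n}$ treated there. The only work is to match indices, verify that the integrability hypothesis of Theorem \ref{2.2} coincides with the one assumed in the corollary, compute the parameter $\lambda$, and substitute into the quantile density formula \eqref{qdf}. The standing positivity assumption $f>0$ on some interval $(a,b)$ (needed for the quantile density to make sense) is carried over from Theorem \ref{2.2}.

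For part (1) I would set $n=4$ and $j=2$, so that $X_{j-1:n-2}=X_{1:2}$ and $X_{j:n}=X_{2:4}$; the condition $\E(X_{1:2}|X_{2:4})=X_{2:4}$ is then exactly \eqref{akuku}, and the hypothesis $\E|X_{1:2}|<\infty$ is precisely $\E|X_{j-1:n-2}|<\infty$. Computing $\lambda=\tfrac{j(j-1)}{(n-j+1)(n-j)+j(j-1)}=\tfrac{2}{6+2}=\tfrac14$, the numerator in \eqref{qdf} becomes $j-1+(n-2j+1)u=1+u$, the exponent of $u$ is $1+(j-1)\lambda=\tfrac54$, and the exponent of $1-u$ is $1+(n-j)(1-\lambda)=1+2\cdot\tfrac34=\tfrac52$, which yields $q(u)\propto\tfrac{1+u}{u^{5/4}(1-u)^{5/2}}$.

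For part (2) I would take $n=4$ and $j=3$, so that $X_{j-1:n-2}=X_{2:2}$, $X_{j:n}=X_{3:4}$, and the assumption $\E|X_{2:2}|<\infty$ matches $\E|X_{j-1:n-2}|<\infty$. Here $\lambda=\tfrac{6}{2+6}=\tfrac34$, so the numerator is $j-1+(n-2j+1)u=2-u$, the exponent of $u$ is $1+(j-1)\lambda=\tfrac52$, and the exponent of $1-u$ is $1+(n-j)(1-\lambda)=\tfrac54$, giving $q(u)\propto\tfrac{2-u}{u^{5/2}(1-u)^{5/4}}$.

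There is no genuine obstacle beyond bookkeeping: the entire content sits in Theorem \ref{2.2}, and once the correct pairs $(n,j)=(4,2)$ and $(4,3)$ are plugged in, the two displayed quantile densities drop out by arithmetic. The one point worth stating carefully is that the integrability hypotheses line up exactly—$\E|X_{1:2}|<\infty$ and $\E|X_{2:2}|<\infty$ are precisely the $\E|X_{j-1:n-2}|<\infty$ required for the two index choices—so that the two open problems of DW are resolved in the case $a=1$, $b=0$ with no additional moment assumptions.
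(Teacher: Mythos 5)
Your proposal is correct and is exactly the paper's own argument: both parts are read off from Theorem \ref{2.2} with $(j,n)=(2,4)$ giving $\lambda=1/4$ and $(j,n)=(3,4)$ giving $\lambda=3/4$, after which the quantile densities follow from \eqref{qdf} by arithmetic. Your index-matching, verification of the moment hypotheses, and computations are all accurate.
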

\begin{proof}
	These results follow directly from Th. \ref{2.2} by taking: in the first case $j=2$ and $n=4$ and thus $\lambda=1/4$; in the second case $j=3$ and $n=4$ and thus $\lambda=3/4$.
	\end{proof}
\vspace{5mm}

	From the proof of Th. \ref{2.2} it follows that if $\E|X_{j-1:n}|<\infty$ and $\E|X_{j+1:n}|<\infty$ and \eqref{akuk} holds for an arbitrary (but fixed) $\lambda\in(0,1)$ then the quantile density of $X_1$ has the form given in \eqref{qdf}. This is a direct extension of Th. 3.1 of ABN (and Th. 2 of Nevzorov (2002)) which follows by taking $n=2i+1$ and $j=i+1$. Note that this is the only case among possible forms of $q$ in \eqref{qdf} when the distribution of $X_1$ is of the complementary beta form.

%
%
%
%
%
%
%

\subsection{OS from the extended sample given os from the original sample}\label{nm}
In this subsection we still keep the assumption that $m<n$ but the conditioning now will be with respect to $X_{i:m}$.

From (\ref{140410-1856}), we derive the conditional density of $X_{j:n}|X_{i:m}=x$
with respect to $\nu _{x}$ as
\begin{eqnarray*}
f_{X_{j:n}|X_{i:m}=x}(y) &=&\sum_{k=i}^{i+n-m}\,\tfrac{\binom{k-1}{i-1}%
\binom{n-k}{m-i}}{\binom{n}{m}}\tfrac{f_{k:n}(x)}{f_{i:m}(x)}%
f_{X_{j:n}|X_{k:n}=x}(y) \\
&=&\sum_{k=i}^{i+n-m}\,\binom{n-m}{k-i}\,F^{k-i}(x)\bar{F}%
^{n-m-(k-i)}(x)f_{X_{j:n}|X_{k:n}=x}(y),
\end{eqnarray*}
and consequently we have the representation
\begin{equation}
\mathbb{E}(X_{j:n}|X_{i:m}=x)=\sum_{\ell =0}^{n-m}\,\binom{n-m}{\ell }
\,F^{\ell }(x)\bar{F}^{n-m-\ell }(x)\,\mathbb{E}(X_{j:n}|X_{i+\ell :n}=x).
\label{expe2}
\end{equation}
It is known that if $j>i+\ell $, then the conditional distribution of $\P_{X_{j:n}|X_{i+\ell :n}=x}$ is the same as the distribution of
the $(j-i-\ell )$th os obtained from an i.i.d. sample of size $
(n-i-\ell )$ from a parent distribution function $\frac{
F(t)-F(x)}{1-F(x)},$ $x<t<\infty $; if $j<i+\ell $, it is the same as the
distribution of the $j$th os in a sample of size $i+\ell -1$
from the parent distribution function $\frac{F(t)}{F(x)},$ $
-\infty <t<x$. Using these facts, after some algebraic manipulation, $
\mathbb{E}(X_{j:n}|X_{i:m}=x)$ can be expressed in a more explicit form. In general, characterizations (or identifiability question) through the form
of $\mathbb{E}(X_{j:n}|X_{i:m})$  seem to be difficult and in some cases linearity of such conditional expectation is plainly non-admissible. Consequently, while discussing characterizations we will restrict our considerations only to several tractable cases.

In the following lemma we derive relatively simple representations of $\mathbb{E}(X_{j:n}|X_{i:m})$ in special cases which will be used in characterizations later on in this subsection. For these special cases we  provide straightforward  proofs which is an alternative to derivations based on the general formula \eqref{expe2}.

\begin{lemma}
\label{lem.1}For $m<n,$ $1\leq i\leq m$ and $1\leq j\leq n$, we have

\begin{enumerate}
\item[(i)] $\mathbb{E}(X_{1:n}|X_{1:m}=x)=x\bar{F}^{n-m}(x)+(n-m)\int_{-%
\infty }^{x}t\bar{F}^{n-m-1}(t)f(t)dt,$

\item[(ii)] $\mathbb{E}(X_{n:n}|X_{m:m}=x)=xF^{n-m}(x)+(n-m)\int_{x}^{\infty
}tF^{n-m-1}(t)f(t)dt,$

\item[(iii)] $\mathbb{E}(X_{i:m+1}|X_{i:m}=x)=x\bar{F}(x)+\frac{i}{F^{i-1}(x)%
}\int_{-\infty }^{x}tF^{i-1}(t)f(t)dt,$

\item[(iv)] $\mathbb{E}(X_{j:n}|X_{1:1}=x)=\int_{-\infty
}^{x}tf_{j:n-1}(t)dt+x\binom{n-1}{j-1}F^{j-1}(x)\bar{F}%
^{n-j}(x)+\int_{x}^{\infty }tf_{j-1:n-1}(t)dt.$
\end{enumerate}
\end{lemma}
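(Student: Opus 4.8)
The plan is to prove all four identities by the same direct probabilistic device rather than by unwinding the general formula \eqref{expe2}. Conditioning on $X_{i:m}=x$, the key is that the additional observations $X_{m+1},\ldots,X_n$ are iid with cdf $F$ and independent of $\{X_1,\ldots,X_m\}$, so I can express $X_{j:n}$ pathwise in terms of $x$ and the order statistics of the additional subsample. In each case I would split according to how many of the additional observations fall below $x$, identify the corresponding conditional density, and integrate; the atom at $x$ (with respect to $\nu_x$) appears exactly when $x$ itself is forced to be the $j$th order statistic of the enlarged sample.

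For (i) I would write $X_{1:n}=\min(X_{1:m},X_{m+1},\ldots,X_n)=\min(x,Y)$, where $Y=\min(X_{m+1},\ldots,X_n)$ has density $(n-m)\bar{F}^{n-m-1}(t)f(t)$ and $\P(Y\ge x)=\bar{F}^{n-m}(x)$. On $\{Y\ge x\}$ one has $X_{1:n}=x$, giving the term $x\bar{F}^{n-m}(x)$, while on $\{Y<x\}$ one has $X_{1:n}=Y$, giving $(n-m)\int_{-\infty}^{x}t\bar{F}^{n-m-1}(t)f(t)\,dt$. Case (ii) is the mirror image using the maximum of $n-m$ additional observations, whose density is $(n-m)F^{n-m-1}(t)f(t)$; it follows by the same argument or by applying (i) to $-X$.

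Case (iii) adds a single observation $X_{m+1}$. Conditionally on $X_{i:m}=x$, the $i-1$ sample points below $x$ are iid with cdf $F(t)/F(x)$ on $(-\infty,x)$. If $X_{m+1}\ge x$ (probability $\bar{F}(x)$) the rank of $x$ is unchanged and $X_{i:m+1}=x$, contributing $x\bar{F}(x)$; if $X_{m+1}<x$ (probability $F(x)$) the $i-1$ lower points together with $X_{m+1}$ form $i$ iid points with cdf $F(t)/F(x)$, so $X_{i:m+1}$ is their maximum, with density $iF^{i-1}(t)f(t)/F^{i}(x)$ on $(-\infty,x)$. Weighting this conditional density by $F(x)$ and integrating yields the second term $\tfrac{i}{F^{i-1}(x)}\int_{-\infty}^{x}tF^{i-1}(t)f(t)\,dt$.

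The delicate case is (iv), where the conditioning variable $X_{1:1}=X_1=x$ and the combined sample is $\{x,X_2,\ldots,X_n\}$. Letting $K$ be the number of the $n-1$ additional observations below $x$ (so $K\sim\mathrm{Bin}(n-1,F(x))$), the rank of $x$ is $K+1$, and I would verify the three regimes: on $\{K\ge j\}=\{Y_{j:n-1}<x\}$ one has $X_{j:n}=Y_{j:n-1}$; on $\{K=j-1\}$ one has $X_{j:n}=x$; and on $\{K\le j-2\}=\{Y_{j-1:n-1}>x\}$ one has $X_{j:n}=Y_{j-1:n-1}$, where $Y_{k:n-1}$ denotes the $k$th os of $X_2,\ldots,X_n$. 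Taking expectations term by term then produces $\int_{-\infty}^{x}tf_{j:n-1}(t)\,dt$, the atom $x\binom{n-1}{j-1}F^{j-1}(x)\bar{F}^{n-j}(x)=x\,\P(K=j-1)$, and $\int_{x}^{\infty}tf_{j-1:n-1}(t)\,dt$. The main obstacle is precisely this bookkeeping: correctly pinning down which order statistic of the additional subsample coincides with $X_{j:n}$ in each regime, and matching the count events $\{K\ge j\}$ and $\{K\le j-2\}$ with the truncation events $\{Y_{j:n-1}<x\}$ and $\{Y_{j-1:n-1}>x\}$, so that the indicator restrictions collapse into the clean one-sided ranges $\int_{-\infty}^{x}$ and $\int_{x}^{\infty}$.
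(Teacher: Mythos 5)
Your proposal is correct and follows essentially the same route as the paper's proof: in each case the os from the larger sample is decomposed pathwise according to where the additional, independent observations fall relative to the conditioning value $x$, and the resulting pieces are integrated (your count-variable phrasing of (iv) is exactly the paper's indicator identity $X_{j:n}=X_{j:n-1}I_{\{X_{n}>X_{j:n-1}\}}+X_{n}I_{\{X_{j-1:n-1}<X_{n}<X_{j:n-1}\}}+X_{j-1:n-1}I_{\{X_{n}<X_{j-1:n-1}\}}$ after exchanging the roles of $X_1$ and $X_n$). The only cosmetic difference is in (iii), where you merge the paper's two sub-cases $X_{i-1:m}<X_{m+1}<X_{i:m}$ and $X_{m+1}<X_{i-1:m}$ into a single "maximum of $i$ iid truncated points" computation, which gives the same integral slightly more quickly.
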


\begin{proof}
First we have
\begin{eqnarray*}
\mathbb{E}(X_{1:n}|X_{1:m}=x) &=&x\text{ }\mathbb{P}(\min \{X_{m+1},\cdots
,X_{n}\}>x|X_{1:m}=x) \\
&&+\mathbb{E}(\min \{X_{m+1},\cdots ,X_{n}\}I_{\{\min \{X_{m+1},\ldots
,X_{n}\}<x\}}|X_{1:m}=x)
\end{eqnarray*}
and
\begin{eqnarray*}
\mathbb{E}(X_{n:n}|X_{m:m}=x) &=&x\text{ }\mathbb{P}(\max \{X_{m+1},\ldots
,X_{n}\}<x|X_{m:m}=x) \\
&&+\mathbb{E}(\max \{X_{m+1},\ldots ,X_{n}\}I_{\{\max \{X_{m+1}\ldots
,X_{n}\}>x\}}|X_{m:m}=x)
\end{eqnarray*}
and the assertions (i) and  (ii) follow immediately.

Next, note that
\begin{equation*}
X_{i:m+1}=X_{i:m}I_{\{X_{m+1}>X_{i:m}\}}+X_{m+1}I_{
\{X_{i-1:m}<X_{m+1}<X_{i:m}\}}+X_{i-1:m}I_{\{X_{m+1}<X_{i-1:m}\}}.
\end{equation*}
Consequently,
\begin{eqnarray*}
\mathbb{E}(X_{i:m+1}|X_{i:m}=x) &=&x\bar{F}(x)+\int_{-\infty }^{x}t\mathbb{P}
(X_{i-1:m}<t|X_{i:m}=x)\,f(t)dt \\
&&+\mathbb{E}(X_{i-1:m}F(X_{i-1:m})|X_{i:m}=x) \\
&=&x\bar{F}(x)+\int_{-\infty }^{x}t\,\left( \tfrac{F(t)}{F(x)}\right)
^{i-1}\,f(t)dt \\
&&+\int_{-\infty }^{x}yF(y)\,(i-1)\tfrac{F^{i-2}(y)}{F^{i-1}(x)}\,f(y)\,dy
\end{eqnarray*}
and this yields the assertion (iii).

Finally, the assertion (iv) follows from%
\begin{equation*}
\mathbb{E}(X_{j:n}|X_{1:1}=x)=\mathbb{E}(X_{j:n}|X_{n}=x)
\end{equation*}
and
\begin{equation*}
X_{j:n}=X_{j:n-1}I_{\{X_{n}>X_{j:n-1}\}}+X_{n}I_{
\{X_{j-1:n-1}<X_{n}<X_{j:n-1}\}}+X_{j-1:n-1}I_{\{X_{n}<X_{j-1:n-1}\}}.
\end{equation*}
The proof is completed.
\end{proof}
Our main objective in this subsection is to show that the shape of the regresion curves studied in Lem. \ref{lem.1} determine the parent distribution $F$.

In the remaining part of this subsection we assume that the density $f=F'$ is strictly positive on $(a, b):=\{x\in \mathbb{R}: \, 0<F(x)<1\}$ with $-\infty\le a<b\le \infty$.

First we will consider the cases (i) and (ii) of Lem. \ref{lem.1}. It appears that under mild conditions  $\mathbb{E}(X_{1:n}|X_{1:m})$ and $\mathbb{E}
(X_{n:n}|X_{m:m})$ determine the parent cdf $F$.



\begin{theorem}
\label{181008-2005}
Suppose that $g$ is a differentiable function on $(a,b)$.
\begin{enumerate}
\item[(A)] If $\E|X_{1:n}|<\infty$ and
\begin{equation}
\mathbb{E}(X_{1:n}|X_{1:m})=g(X_{1:m}), \label{eq01}
\end{equation}
then
	\begin{enumerate}
	\item[(A.1)] $\lim_{x\to a+}g(x)=a$;
	\item[(A.2)] $g'$ is a decreasing function with  $\lim_{x\to a^+}g'(x)=1$ and  $\lim_{x\to b^-}g'(x)=0$;
	\item[(A.3)]$F(x)=1-\sqrt[n-m]{g^{\prime }(x)}$, for $x\in (a,b)$.
	\end{enumerate}
\item[(B)] If $\E|X_{n:n}|<\infty$ and
\begin{equation}
\mathbb{E}(X_{n:n}|X_{m:m})=g(X_{m:m}),
\label{181009-1808}
\end{equation}
then
	\begin{enumerate}
	\item[(B.1)] $\lim_{x\to b^-}g(x)=b$;
	\item[(B.2)] $g'$ is an increasing function with $\lim_{x\to a^+}g'(x)=0$ and $\lim_{x\to b^-}g'(x)=1$;
	\item[(B.3)]$F(x)=\sqrt[n-m]{g^{\prime }(x)}$, for $x\in (a,b)$.
	\end{enumerate}	
\end{enumerate}
\end{theorem}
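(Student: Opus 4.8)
The plan is to prove part (A) in full and to obtain part (B) from it by symmetry, exchanging the roles of the left and right tails: under the reflection $X\mapsto -X$ the statistic $X_{n:n}$ becomes (minus) a minimum and $X_{m:m}$ becomes (minus) a first order statistic, so (B) is exactly (A) with $F$ and $\bar F$ interchanged and $x\to b^-$ in place of $x\to a^+$. Throughout I would write $p:=n-m\ge 1$ and $g(x):=\mathbb{E}(X_{1:n}\mid X_{1:m}=x)$, which is finite for $x\in(a,b)$ thanks to $\E|X_{1:n}|<\infty$. The starting point is the representation of Lemma \ref{lem.1}(i),
\[
g(x)=x\bar F^{p}(x)+p\int_{-\infty}^{x}t\,\bar F^{p-1}(t)f(t)\,dt ,
\]
the integral being convergent precisely because $X_{1:n}$ is integrable.

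The key observation is that differentiation collapses this expression. I would differentiate, applying the fundamental theorem of calculus to the integral term and the product rule to $x\bar F^{p}(x)$ (recalling $\bar F'=-f$); the two terms equal to $px\bar F^{p-1}(x)f(x)$ then cancel, leaving simply
\[
g'(x)=\bar F^{\,n-m}(x),\qquad x\in(a,b).
\]
This single identity yields (A.3) at once, since $\bar F(x)=\sqrt[n-m]{g'(x)}$ gives $F(x)=1-\sqrt[n-m]{g'(x)}$. It also yields (A.2): because $f>0$ on $(a,b)$ the tail $\bar F=1-F$ is strictly decreasing there, hence so is $g'=\bar F^{\,n-m}$, and the boundary values follow from $F(a^{+})=0$ and $F(b^{-})=1$, giving $g'(a^{+})=1$ and $g'(b^{-})=0$.

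It remains to establish the boundary behaviour (A.1), and here the cases of a finite and an infinite left endpoint must be separated. If $a$ is finite, I would read the limit directly off the representation above: as $x\to a^{+}$ one has $\bar F(x)\to1$, so $x\bar F^{p}(x)\to a$, while $\bigl|\,p\int_{a}^{x}t\,\bar F^{p-1}(t)f(t)\,dt\,\bigr|\le p\,\max_{[a,x]}|t|\,F(x)\to0$, whence $g(x)\to a$. If $a=-\infty$, I would instead integrate the identity $g'=\bar F^{p}$ from a fixed interior point $x_{0}$, writing $g(x)=g(x_{0})-\int_{x}^{x_{0}}\bar F^{p}(t)\,dt$; since $\bar F^{p}(t)\to1$ as $t\to-\infty$, a crude estimate ($\bar F^p\ge\tfrac12$ far enough out) forces the integral to diverge, so $g(x)\to-\infty=a$. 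I expect this boundary analysis, rather than the differentiation, to be the only delicate point: one must invoke the integrability hypothesis both to guarantee that $g$ is finite and that the integral term is well defined, and one must treat the unbounded endpoint through a growth estimate rather than by substituting the endpoint. Part (B) is then immediate by the reflection described above: it turns the basic relation into $g'=F^{\,n-m}$, reverses all the monotonicities, and replaces $x\to a^{+}$ with $x\to b^{-}$, yielding (B.1)--(B.3).
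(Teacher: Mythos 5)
Your proof is correct and takes essentially the same route as the paper: both start from Lemma \ref{lem.1}(i), differentiate to collapse the representation into $g'(x)=\bar F^{\,n-m}(x)$, read off (A.2) and (A.3) from the properties of a cdf, and dispose of (B) by the reflection $X\mapsto -X$. The only difference is that you spell out the boundary analysis for (A.1), separating finite $a$ from $a=-\infty$, where the paper simply states that it ``follows easily by taking limits $x\to a^+$ in both sides'' of the integral equation; your extra care there is sound but not a different method.
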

\begin{proof}
	Since there is an obvious duality between the two cases (it suffices to consider negative of the original observations to move between (A) and (B)) we provide only the proof of (A).
	
From (i) of Lem. \ref{lem.1} and \eqref{eq01} we obtain the equation
\begin{equation}
x\bar{F}^{n-m}(x)+(n-m)\int_{a }^{x}t\bar{F}^{n-m-1}(t)f(t)dt=g(x),
\,\,\, x\in (a,b),
\label{eq05}
\end{equation}
then (A.1) follows easily by taking limits $x\to a^+$ in both sides of (\ref{eq05}).
Differentiating (\ref{eq05}), after elementary algebra,  we get
\begin{equation*}
\bar{F}^{n-m}(x)
=g^{\prime }(x),\,\,\, x\in (a,b),
\end{equation*}
then (A.2) follows from the well-known properties of a cdf and (A.3) is immediate.
\end{proof}

Note that  linearity of regression in \eqref{eq01} (or \eqref{181009-1808}) is impossible since it would lead to $F$ being constant.

\vspace{5mm}
As an illustration of Th. \ref{181008-2005} we provide two examples:
\begin{itemize}
	\item  if either
\begin{equation*}
\mathbb{E}(X_{1:n}|X_{1:m}=x)=\frac{1-(1-x)^{n-m+1}}{n-m+1},\text{ }0\le x \le 1,
\end{equation*}
or
\begin{equation*}
\mathbb{E}(X_{n:n}|X_{m:m}=x)=\frac{x^{n-m+1}+n-m}{n-m+1},\text{ }0\le x \le 1.
\end{equation*}
then the parent distribution is uniform on $(0,1)$;
\item if
$$
\E (X_{1:n}\mid X_{1:m}=x)=\tfrac{1-\exp(-(n-m)x)}{n-m}, \;\;\; x\ge 0,
$$
then the parent distribution is standard (i.e. with mean 1) exponential.
\end{itemize}

\vspace{5mm}
Now we will consider case (iii) of Lem. \ref{lem.1}.

\begin{theorem}
\label{thh}
Suppose $\E |X_{i:m+1}|<\infty$ for certain integers $2\le i\le m$ and let $h$ be a differentiable function on $(a,b)$. If
%
%
\begin{equation*}
\mathbb{E}(X_{i:m+1}|X_{i:m})=X_{i:m}-h(X_{i:m}).
\end{equation*}
Then
\begin{equation}
F(x)=
\displaystyle
\frac{h^{-\frac{1}{i-1}}(x)}{h^{-\frac{1}{i-1}}(b^-)
+\displaystyle\frac{1}{i-1}
\int_x^b  h^{-\frac{i}{i-1}}(t)\,dt},\qquad x\in
(a,\,b). \label{Fx}
\end{equation}
\end{theorem}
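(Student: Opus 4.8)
The plan is to turn the regression identity into an integral equation for $F$, differentiate it to a nonlinear ODE, linearize that equation by the substitution $w=1/F$, and solve the resulting linear equation with an integrating factor.

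First I would insert the representation from Lem. \ref{lem.1}(iii) into the hypothesis $\E(X_{i:m+1}|X_{i:m}=x)=x-h(x)$. Writing $x\bar F(x)=x-xF(x)$ and using $F^{i-1}(t)f(t)=\tfrac1i\,dF^i(t)$, the integral term is amenable to integration by parts; its boundary contribution at $x=a$ is $\lim_{t\to a^+}tF^i(t)$, which vanishes --- trivially if $a>-\infty$ (since $F(a^+)=0$), and by Lem. \ref{iint} with $(k,n)=(i,m+1)$ (using $\E|X_{i:m+1}|<\infty$) if $a=-\infty$. After the $xF(x)$ terms cancel, the hypothesis collapses to
\[
\int_a^x F^i(t)\,dt=h(x)\,F^{i-1}(x),\qquad x\in(a,b).
\]
Since the left-hand side is strictly positive for $x>a$ and $F^{i-1}(x)>0$, this already forces $h>0$ on $(a,b)$, so the fractional powers of $h$ in \eqref{Fx} are well defined.

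Next I would differentiate this identity ($h$ and $F$ are differentiable and $F>0$), obtaining $F^i=h'F^{i-1}+(i-1)hF^{i-2}f$; dividing by $F^{i-2}$ gives the nonlinear first-order ODE $(i-1)hF'=F^2-h'F$. The key point is that $w:=1/F$ linearizes it: since $F'=-w'/w^2$, the equation becomes the linear ODE
\[
(i-1)\,h(x)\,w'(x)-h'(x)\,w(x)=-1,\qquad x\in(a,b).
\]

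Finally I would solve this using the integrating factor $h^{-1/(i-1)}$, which recasts the equation as $\tfrac{d}{dx}\bigl(h^{-1/(i-1)}w\bigr)=-\tfrac{1}{i-1}h^{-i/(i-1)}$. Integrating from $x$ to $b$ and using the boundary value $w(b^-)=1/F(b^-)=1$ yields $h^{-1/(i-1)}(x)\,w(x)=h^{-1/(i-1)}(b^-)+\tfrac{1}{i-1}\int_x^b h^{-i/(i-1)}(t)\,dt$, and inverting $w=1/F$ returns exactly \eqref{Fx}. The only genuine obstacles are the endpoint justifications --- vanishing of the boundary term at $a$ (handled above) and the finiteness of $h^{-1/(i-1)}(b^-)$ together with convergence of $\int_x^b h^{-i/(i-1)}$ when $b=\infty$ --- both implicit in the well-posedness of \eqref{Fx}; the step in between is the routine solution of a linear first-order ODE.
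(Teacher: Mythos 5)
Your proposal is correct and follows essentially the same path as the paper: both plug Lem.~\ref{lem.1}(iii) into the hypothesis, integrate by parts using Lem.~\ref{iint} to kill the boundary term at $a$, reach the key identity $F^{i-1}(x)h(x)=\int_a^x F^i(t)\,dt$ (whence $h>0$), and then solve a first-order ODE with boundary data at $b$. The only divergence is cosmetic: the paper sets $G(x)=\int_a^x F^i(t)\,dt$ and integrates the separable equation $G^{-i/(i-1)}(x)G'(x)=h^{-i/(i-1)}(x)$ using $G(b)=h(b^-)$, whereas you differentiate the identity and linearize via $w=1/F$ with integrating factor $h^{-1/(i-1)}$ and $w(b^-)=1$; either way the same integral $\int_x^b h^{-i/(i-1)}(t)\,dt$ produces \eqref{Fx}.
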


\begin{proof}
Using  Lem. \ref{lem.1}(iii)  we have
$$
x\overline{F}(x)
+
\frac{i}{F^{i-1}(x)}
\int_a^x t F^{i-1}(t) f(t)\, dt
=x-h(x),\,\,x\in (a,b)
$$
and, after simple algebra,
$$
F^{i-1}(x)h(x)=xF^{i}(x)-i\int_{a }^{x}\,tF^{i-1}(t)f(t)\,dt, \,\,x\in (a,b).
\label{eq06}
$$
Integrating by parts (observe that if $a=-\infty$ then $\lim_{x\to a^+} xF^i(x)=0$ due to the hypothesis $\E|X_{i:m+1}|<\infty$ and Lem. \ref{iint}),
\begin{equation}
\label{181009-2019}
F^{i-1}(x) h(x)=\int_a^x F^i(t)\,dt
 ,\,\,x\in (a,b),
\end{equation}
from which we conclude that $h(x)>0$ for $x\in (a,b)$.

Let $G(x)=\int_{a}^x F^{i}(t)\,dt$, $x\in (a,b]$, so
\begin{equation}
\label{181011-1254}
F(x)=G'(x)^{\frac{1}{i}},\qquad x\in (a,b),
\end{equation}
and, after some algebra, \eqref{181009-2019} yields
\begin{equation*}
G^{-\frac{i}{i-1}}(x)\, G'(x)= h^{-\frac{i}{i-1}}(x),\qquad x\in (a,b).
\end{equation*}
Therefore,
\begin{equation*}
\int_x^b G^{-\frac{i}{i-1}}(t)\, G'(t)\,dt
=
\int_x^b  h^{-\frac{i}{i-1}}(t)\,dt
,\qquad x\in(a,b)
\end{equation*}
or equivalently
\begin{equation*}
G^{-\frac{1}{i-1}}(x)-G^{-\frac{1}{i-1}}(b)=\frac{1}{i-1}\int_x^b  h^{-\frac{i}{i-1}}(t)\,dt
,\qquad x\in(a,b).
\end{equation*}
From \eqref{181009-2019} it follows that $h(b^-)=G(b)$, and thus
\begin{equation}
G(x)=
\left(
h^{-\frac{1}{i-1}}(b^-)
+\displaystyle\frac{1}{i-1}
\int_x^b  h^{-\frac{i}{i-1}}(t)\,dt
\right)^{-(i-1)}
,\,\,\,x\in(a,b)
\end{equation}
(if $h(b^-)=\infty$, which is possible, we take  $h(b^-)^{-\frac{1}{i-1}}=0$).
The result follows now easily by \eqref{181011-1254}.
\end{proof}

As an illustration of Th. \ref{thh} we provide some examples:
\begin{itemize}
	\item if for $\alpha>0$
	\begin{equation*}
	\E\left(X_{i:m+1}\mid X_{i:m}=x\right) =x-\tfrac{x^{\alpha+1}}{i\alpha+1},\qquad 0
	\le x \le 1,
	\end{equation*}
	then the parent distribution is power with the cdf $F(x)=x^\alpha$, $x\in [0,1]$.
	\item if for $A>0$  and $r>0$ such that $ri>1$
	 \begin{equation*}
	 \E\left( X_{i:m+1}\mid X_{i:m}=x\right) =x-\tfrac{(1+A(b-x))^{-r+1}}{A(ri-1)},\qquad x\le b,
	 \end{equation*}
	 then the parent distribution is (negative) Type IV Pareto distribution with the cdf  $F(x)=(1+A(b-x))^{-r}$, $x\in(-\infty, b]$;
	
	 \item if we specialize the above example by fixiing $r=1$ we obtain characterization of cdf $F(x)=\tfrac{1}{1+A(b-x)}$, $x\le b$, by the linearity of regression
	 $$
	 \E(X_{i:m+1}|X_{i:m}=x)=x-\frac{1}{A(i-1)},\qquad x\le b;
	 $$
	 \item if for $\lambda>0$
	 $$
	 \E\left(X_{i:m+1}\mid X_{i:m}=x\right) =x-\tfrac{\exp(\lambda x)}{i\lambda},\qquad x\le 0.
	 $$
	 then the parent distribution is negative exponential with the cdf $F(x)=\exp(\lambda x)$, $x\in (-\infty, 0]$.
\end{itemize}

\vspace{5mm}
Characterization by $\E(X_{1:1}|X_{j:n})$ (of course, $X_{1:1}=X_1$) was studied in WG and Balakrishnan and Akhundov (2003) in the linear case. In case (iv) of Lem. \ref{lem.1} we will consider the dual conditional expectation $\E(X_{j:n}|X_{1:1})$. We are not able to express the parent distribution in terms of this regression function in this case. Instead we solve a more modest question of identifiability of the distribution of $X_1$.

\begin{theorem}
\label{thm3.5}
Suppose $\E |X_{j:n}|<\infty$ for certain integers $1\le j\le n$. Then the conditional
expectation $\mathbb{E}(X_{j:n}|X_{1:1})$ uniquely determines the parent cdf $F$.
\end{theorem}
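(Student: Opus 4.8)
The plan is to recover $F$ explicitly and pointwise from the regression function, which yields uniqueness at once. Write $K(x):=\E(X_{j:n}\mid X_{1:1}=x)$ for $x\in(a,b)$. For $2\le j\le n-1$ I would start from the expression in Lemma~\ref{lem.1}(iv) together with the standard densities $f_{j:n-1}=(n-j)\binom{n-1}{j-1}F^{j-1}\bar{F}^{n-1-j}f$ and $f_{j-1:n-1}=(j-1)\binom{n-1}{j-1}F^{j-2}\bar{F}^{n-j}f$ (the cases $j=1$ and $j=n$ being handled identically from Lemma~\ref{lem.1}(i),(ii)). Since $\E|X_{j:n}|<\infty$ makes the two integrals in Lemma~\ref{lem.1}(iv) absolutely continuous in $x$, I may differentiate $K$ term by term. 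The key computation is that the boundary contribution $xf_{j:n-1}(x)$ coming from the first integral cancels the $-(n-j)$ part of $\tfrac{d}{dx}\big(x\binom{n-1}{j-1}F^{j-1}\bar{F}^{n-j}\big)$, while $-xf_{j-1:n-1}(x)$ from the third integral cancels its $(j-1)$ part, leaving the clean identity
\[
K'(x)=\binom{n-1}{j-1}F^{j-1}(x)\bar{F}^{n-j}(x)=:\phi(F(x)),\qquad x\in(a,b),
\]
where $\phi(u):=\binom{n-1}{j-1}u^{j-1}(1-u)^{n-j}$. As $F$ is continuous, $K\in C^1$ and this holds on all of $(a,b)$.

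Everything now hinges on inverting $\phi$. For the extreme indices this is immediate: $\phi(u)=u^{n-1}$ is strictly increasing when $j=n$ and $\phi(u)=(1-u)^{n-1}$ is strictly decreasing when $j=1$, so $\phi$ is injective and $F(x)=\phi^{-1}(K'(x))$ reconstructs $F$ from $K'$, hence from $K$.

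The genuine obstacle is the range $2\le j\le n-1$, where $\phi$ fails to be injective: it vanishes at both endpoints, increases strictly on $[0,u^*]$ and decreases strictly on $[u^*,1]$ with interior mode $u^*=\tfrac{j-1}{n-1}$, so each value in $(0,\phi(u^*))$ has two $\phi$-preimages. I would resolve the ambiguity using the running hypothesis $f>0$ on $(a,b)$, which makes $F$ a continuous strictly increasing bijection of $(a,b)$ onto $(0,1)$. Consequently $K'=\phi\circ F$ is strictly increasing on $(a,x^*)$ and strictly decreasing on $(x^*,b)$, where $x^*:=F^{-1}(u^*)$; thus $x^*$ is intrinsically determined by $K$ as the unique maximizer of $K'$, and $F(x^*)=u^*$. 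For $x<x^*$ one has $F(x)\in(0,u^*)$, so $F(x)$ is the value returned by the inverse of the increasing branch of $\phi$ at $K'(x)$; for $x>x^*$ one has $F(x)\in(u^*,1)$, so $F(x)$ is the value returned by the inverse of the decreasing branch at $K'(x)$. This pins down $F$ on $(a,b)$, and $F\equiv0$ on $(-\infty,a]$ and $F\equiv1$ on $[b,\infty)$ finish the reconstruction. I expect the only points requiring care to be the algebraic bookkeeping behind the cancellations that produce the clean $K'$, and the verification that $K'$ has a strict, unique maximizer — which is exactly where strict unimodality of $\phi$ and strict monotonicity of $F$ are used — rather than any deeper analytic difficulty.
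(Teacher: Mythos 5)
Your proof is correct, and after the common first step it takes a genuinely different route from the paper's. Both arguments hinge on the same identity $K'(x)=\binom{n-1}{j-1}F^{j-1}(x)\bar{F}^{n-j}(x)$, which is exactly the paper's equation \eqref{hprim}, and both exploit the same underlying structure: the beta-type function $u\mapsto u^{j-1}(1-u)^{n-j}$ increases strictly on $\left(0,\tfrac{j-1}{n-1}\right)$ and decreases strictly on $\left(\tfrac{j-1}{n-1},1\right)$. From there the paper argues by contradiction and continuation: assuming $F$ and $G$ yield the same regression, it rewrites $F^{(j-1)/(n-j)}\bar{F}=G^{(j-1)/(n-j)}\bar{G}$ as the vanishing of $\int_{G(x)}^{F(x)}\bigl((j-1)t^{(j-1)/(n-j)-1}-(n-1)t^{(j-1)/(n-j)}\bigr)\,dt$, uses strict positivity of the integrand below $\tfrac{j-1}{n-1}$ to force $F=G$ near the left end of the support, and then shows that at $x_0=\sup\{x:F(x)=G(x)\}$ one has $F(x_0)\ge\tfrac{j-1}{n-1}$, so that strict negativity of the integrand beyond that level forces agreement for all $x>x_0$. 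You instead turn the same unimodality into an explicit reconstruction: the unique maximizer $x^*$ of $K'$ is intrinsic to the regression function, satisfies $F(x^*)=\tfrac{j-1}{n-1}$, and $F$ is recovered on either side of $x^*$ by inverting the corresponding strictly monotone branch of $\phi$. What your route buys is a constructive inversion formula for $F$ in terms of $K'$ (uniqueness becomes immediate rather than emerging from a supremum argument), plus clean coverage of the extreme cases $j=1$ and $j=n$, where the paper's displayed manipulation (which divides the exponent by $n-j$) degenerates and is only handled by the remark following the theorem; what the paper's route buys is that it never needs to identify the maximizer of $K'$ or track the two branches explicitly, the sign of the integrand doing that bookkeeping implicitly. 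Both proofs use the running assumption $f>0$ on $(a,b)$ in the same essential way, to make $F$ a strictly increasing bijection of $(a,b)$ onto $(0,1)$, and your term-by-term differentiation with the $C^1$ upgrade of $K$ is at (indeed slightly above) the paper's level of rigor on that point.
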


\begin{proof}
Denote $h(x)=\mathbb{E}(X_{j:n}|X_{1:1}=x)$, $x\in (a,\,b)$. From (iv)
of Lem. \ref{lem.1},
\begin{equation*}
h(x)=\int_{-\infty }^{x}tf_{j:n-1}(t)dt+x\binom{n-1}{j-1}F^{j-1}(x)
\bar{F}^{n-j}(x)+\int_{x}^{\infty }tf_{j-1:n-1}(t)dt,\,\,\, x\in (a,b).
\end{equation*}
Then differentiating the above equation with respect to $x$ we get
\begin{equation}\label{hprim}
h^{\prime }(x)=\binom{n-1}{j-1}F^{j-1}(x)\bar{F}^{n-j}(x),\,\,\, x\in (a,b).
\end{equation}

Let us assume that the $H$ is not unique, that is there exist two
different distribution functions $F$ and $G$ with the same support such that
$H$ is the same for $F$ and $G$. Hence
\begin{equation*}
F^{\tfrac{j-1}{n-j}}(x)\bar{F}(x)=G^{\tfrac{j-1}{n-j}}(x)\bar{G}(x),\,\,\, x\in (a,b),
\end{equation*}
which can be rewritten as
\begin{equation}
\int_{G(x)}^{F(x)}\left( (j-1)t^{\tfrac{j-1}{n-j}-1}-(n-1)t^{\tfrac{j-1}{n-j}}\right) dt=0,\,\,\, x\in (a,b).  \label{inte}
\end{equation}
Note that, for $0<t<\tfrac{j-1}{n-1},$ the integrand in \eqref{inte} is
strictly positive. Therefore $F(x)=G(x)$ in a right neighbourhood of the
left end of the support. Consequently, we have $x_{0}=\sup \{x\geq
a:\,F(x)=G(x)\}>a$ and by continuity, $F(x_{0})=G(x_{0})$.

Let us prove that $F(x_{0})=G(x_{0})\geq \tfrac{j-1}{n-1}$. Assume the
opposite, $F(x_{0})=G(x_{0})<\tfrac{j-1}{n-1}$. Then, by continuity of $F$
and $G,$ there exists $\epsilon >0$ such that $F(x_{0}+\epsilon )<\tfrac{j-1
}{n-1}$ and $G(x_{0}+\epsilon )<\tfrac{j-1}{n-1}$. Hence again the integrand
in \eqref{inte} is strictly positive and we get $F(x_{0}+\epsilon
)=G(x_{0}+\epsilon )$ which contradicts the definition of $x_{0}$.
Therefore, $F(x_{0})=G(x_{0})\geq \tfrac{j-1}{n-1}$. Consider now an
arbitrary $x>x_{0}$. Since $F$ and $G$ are strictly increasing on $(a,b)$ we
see that $F(x)>\tfrac{j-1}{n-1}$ and $G(x)>\tfrac{j-1}{n-1}$. But for $t>
\tfrac{j-1}{n-1}$ the integrand in \eqref{inte} is strictly negative.
Consequently $F(x)=G(x)$.
\end{proof}

Note that due to \eqref{hprim} the derivative of the regression function can be useful for determining the parent cdf. In particular, it follows from \eqref{hprim} that
(a) if $\E(X_{1:n}|X_{1:1}=x)=g(x)$ is differentiable then $F(x)=1-\sqrt[n-1]{g'(x)}$;
(b) if $\E(X_{n:n}|X_{1:1}=x)=g(x)$ is differentiable then $F(x)=\sqrt[n-1]{g'(x)}$. Actually, these results  are also covered by Th. \ref{181008-2005} for $m=1$.


Finally, we use \eqref{hprim} to derive two new characterizations of the logistic distribution.
\begin{corollary}
Assume that either
$$
\E(X_{2:3}|X_{1:1}=x)=\tfrac{2e^x}{1+e^x},\quad x\in\R,
$$
or with unknown parent cdf $F$
$$
\E(X_{2:3}|X_{1:1}=x)\propto F(x),\quad x\in\R,
$$
or
$$
\E(X_{2:3}|X_{1:1}=x)\propto \bar{F}(x),\quad x\in\R.
$$

\noindent Then $X_1$, $X_2$ and $X_3$ have the logistic distribution.
\end{corollary}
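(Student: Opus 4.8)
The common engine for all three statements is the derivative formula \eqref{hprim}. The plan is to specialize it to $n=3$, $j=2$, where $\binom{n-1}{j-1}=\binom{2}{1}=2$, so that, writing $h(x)=\E(X_{2:3}|X_{1:1}=x)$, one obtains the single identity $h'(x)=2F(x)\bar{F}(x)=2F(x)(1-F(x))$ on $\R$. As in Th. \ref{thm3.5} and Lem. \ref{lem.1}(iv), this presupposes $\E|X_{2:3}|<\infty$, which holds for the logistic; each of the three hypotheses will then be fed into this one relation, and the standing assumption that $f=F'>0$ on the support will exclude the degenerate solutions $F\equiv 0,1$.

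For the first case the regression $h$ is prescribed, so I would differentiate $h(x)=\tfrac{2e^x}{1+e^x}$ to get $h'(x)=\tfrac{2e^x}{(1+e^x)^2}$ and substitute into the identity. This reduces the problem to the pointwise algebraic equation $2F(x)(1-F(x))=\tfrac{2e^x}{(1+e^x)^2}$, a quadratic in $F(x)$ whose roots are $\tfrac{e^x}{1+e^x}$ and $\tfrac{1}{1+e^x}$. Only the first is increasing and satisfies $F(-\infty)=0$, $F(+\infty)=1$, so $F(x)=\tfrac{e^x}{1+e^x}$, the standard logistic. Alternatively, since the standard logistic does produce this $h$, the uniqueness statement of Th. \ref{thm3.5} identifies it as the only admissible parent.

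In the remaining two cases $F$ itself is unknown and appears inside the hypothesis, so I would treat the relation as a differential equation. Writing $h=cF$ (case two) or $h=c\bar{F}$ (case three) for a nonzero constant $c$ and differentiating gives $h'=cf$ or $h'=-cf$; equating with $h'=2F(1-F)$ yields the separable equation $cF'=\pm 2F(1-F)$. Since $h'=2F\bar{F}\ge 0$ while $F$ is increasing, the sign of $c$ is forced: positive in case two, negative in case three. Integrating $\int\tfrac{dF}{F(1-F)}=\int\tfrac{2}{|c|}\,dx$ then gives $\tfrac{F}{1-F}=Ae^{2x/|c|}$, that is $F(x)=\bigl(1+A^{-1}e^{-2x/|c|}\bigr)^{-1}$, a location--scale logistic law, which exhibits $X_1,X_2,X_3$ as logistic.

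The main obstacle is concentrated in these last two cases: because the proportionality constant is not prescribed, the hypothesis delivers a genuine first-order ODE rather than the pointwise algebraic identity of case one, and I must integrate it and then verify that its admissible solutions---those that are bona fide cdf's with the correct limits at $\pm\infty$---are precisely the logistic family, while discarding the wrong sign of $c$ and the constant solutions. Checking the integrability condition $\E|X_{2:3}|<\infty$ that legitimizes \eqref{hprim} is a minor but necessary preliminary in each case.
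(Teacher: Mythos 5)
Your proposal is correct and, in substance, is the same proof as the paper's: both specialize \eqref{hprim} to $n=3$, $j=2$ so that $h'(x)=2F(x)\bar{F}(x)$, both settle the first case by solving the quadratic and discarding the decreasing root $\tfrac{1}{1+e^x}$, and both reduce the last two cases to $f\propto F\bar{F}$. The only real difference is the finish of cases two and three: the paper identifies $f\propto F\bar{F}$ as membership in $CB(1,1)$ and cites Galambos (1991) for the fact that this is exactly the logistic family, whereas you integrate the separable equation $|c|\,f=2F\bar{F}$ directly and obtain the location--scale logistic cdf $F(x)=\bigl(1+A^{-1}e^{-2x/|c|}\bigr)^{-1}$; your version is self-contained, the paper's is shorter.

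One aside in your first case is false, although your main argument does not rely on it. For the standard logistic, $\E(X_{2:3}|X_{1:1}=x)=\tfrac{e^x-1}{e^x+1}$, not $\tfrac{2e^x}{1+e^x}$: by Lem. \ref{lem.1}(iv) this regression tends to $\E X_{1:2}=-1$ as $x\to-\infty$ and to $\E X_{2:2}=1$ as $x\to+\infty$, whereas the prescribed function runs from $0$ to $2$; the two functions differ by the constant $1$, which is invisible after differentiation. Consequently your fallback ``the standard logistic does produce this $h$, so Th. \ref{thm3.5} gives uniqueness'' is not available, and in fact no distribution satisfies the first displayed hypothesis exactly as printed, so that case is vacuously true. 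This is a blemish in the corollary's statement itself (the paper's proof, which likewise works only at the level of $h'$, silently passes over it) rather than a gap in your primary derivation, but you should either drop the aside or note that the intended regression function is $\tfrac{e^x-1}{e^x+1}$.
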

\begin{proof}
In the first case \eqref{hprim} implies
$$
F(x)\bar{F}(x)=\tfrac{e^x}{(1+e^x)^2}.
$$
There are two solutions of the above quadratic equation in the unknown $F(x)$. Only one of them, $F(x)=\tfrac{e^x}{1+e^x}$, $x\in\R$, gives the valid (logistic) distribution function.

In the remaining two cases \eqref{hprim} yields $f\propto F\bar{F}$, i.e. we obtain a distribution from $CB(1,1)$ family, which is the family of logistic laws - see e.g. Galambos (1991).
\end{proof}
The last two cases in the above corollary can be easily generalized:
\begin{itemize}
\item if $\E(X_{j:n}|X_{1:1}=x)\propto F^s(x)$, $s\in\R$, then the parent distribution belongs to $CB(j-s,n-j)$;
\item if $\E(X_{j:n}|X_{1:1}=x)\propto \bar{F}^s(x)$, $s\in\R$, then the parent distribution belongs to $CB(j-1,n-j-s+1)$.
\end{itemize}


\section{Conclusion}
\label{sec.6}
The aim of this paper is two-fold: (1) derivation of bivariate distribution of os's from overlapping samples in the general overlapping scheme; (2) investigations of regression properties of os's from overlapping samples, in particular, extension of characterizations by linearity of regression of os's or identifiability results to the overlapping situation. Throughout the paper we assumed that the original observations are iid and their common distribution is absolutely continuous with respect to the Lebesque measure. The first task was fully resolved. Though the general formula is quite complicated, in several important special cases it gives quite transparent formulas and can be useful e.g.  in studying moving order statistics or analyzing conditional structure of os's from overlapping samples. Regarding the second task we identified new settings in which  linearity of regression or the general form of the regression function characterizes  the parent  distributions, in several other cases  uniqueness results were obtained instead. However, the issue of characterizing of the parent cdf $F$  by using a general relation
$$\E\left(X_{i:m}|X_{j:n}^{(r)}\right)=h(X_{j:n}^{(r)}),$$
where  $h:\mathbb{R}\rightarrow \mathbb{R}$ is a Borel function, remains open and seems to be rather difficult to settle.

Finally let us mention that in the special case of $\E(X_{i:m}|X_{j:n})$, due to  \eqref{expe1}, the problem we studied embeds naturally in the question of characterization of the parent distribution by regression of $L$-statistics of the form $\sum_{i=1}^n a_iX_{i:n}$ on a single os $X_{j:n}$.

\vspace{0.8cm}
\noindent\textbf{Acknowledgments.}
 The authors are greatly indebted to the referees for valuable suggestions which
helped significantly in improving the final version of the paper. Support for this research was provided in part by the Ministry of
Science and Technology, Taiwan, Grant No. 102-2118-M-305-003 and
104-2118-M-305-003, by Ministerio de Economia, Industria y Competitividad, Spain, Grant MTM2016-74983 and by the National
Science Center, Poland, Grant 2016/21/B/ST1/00005.

\vspace*{0.8cm}
\section*{\bf Appendix}
\label{sec.5}

\begin{proof}[Proof of Prop. \ref{prawd}]
\textbf{(i) }$\boldsymbol{k<l}$\textbf{.} Note that
\begin{equation}
\label{k<l}
\{X_{i:m}=X_{k:n+r},\,X_{j:n}^{(r)}=X_{l:n+r}\}=\bigcup_{\substack{\alpha\in A\cup B \\ \beta\in B\cup C \\ \alpha\neq \beta}}\,\{X_{i:m}=X_{k:n+r}=X_{\alpha },\,X_{j:n}^{(r)}=X_{l:n+r}=X_{\beta
}\}
\end{equation}
and the sets under $\bigcup$ at the right-hand side are pair-wise disjoint. Moreover, for any distinct $\alpha\in A\cup B$ and $\beta\in B\cup C$
\begin{equation}\label{union1}
\{X_{i:m}=X_{k:n+r}=X_{\alpha },\,X_{j:n}^{(r)}=X_{l:n+r}=X_{\beta
}\}=\bigcup_{\sigma \in S(\alpha ,\beta )}\,\{X_{\sigma (1)}\leq \ldots \leq
X_{\sigma (r+n)}\},
\end{equation}
where
\begin{eqnarray*}
S(\alpha ,\beta ) &=&\left\{ \sigma \in \mathcal{S}_{n+r}:\,\sigma
(k)=\alpha ,\,\sigma (l)=\beta ,\,|\sigma (\{1,\ldots ,k-1\})\cap (A_{\alpha
}\cup B_{\alpha ,\beta })|=i-1,\right. \\
&&\hspace{0.2cm}\left. \left\vert \sigma (\{1,\ldots ,l-1\}\setminus
\{k\})\cap (B_{\alpha ,\beta }\cup C_{\beta })\right\vert =j-1-I_{B}(\alpha
)\right\}.
\end{eqnarray*}
Here and in the sequel we denote $U_{x_{1},\ldots ,x_{K}}:=U\setminus \{x_{1},\ldots ,x_{K}\}$ for any set
$U$. Since the sets $A_{\alpha }$, $B_{\alpha ,\beta }$ and $C_{\beta }$ are
disjoint and
\begin{equation*}
A_{\alpha }\cup B_{\alpha ,\beta }\cup C_{\beta }=(A\cup B\cup C)_{\alpha
,\beta }
\end{equation*}
it follows that
\begin{eqnarray*}
S(\alpha ,\beta ) &=&\left\{ \sigma \in \mathcal{S}_{n+r}:\,\sigma
(k)=\alpha ,\,\sigma (l)=\beta ,\text{ }|\sigma (\{1,\ldots ,k-1\})\cap
C_{\beta }|=k-i,\right. \\
&&\hspace{0.2cm}\left. |\sigma (\{1,\ldots ,l-1\}\setminus \{k\})\cap
A_{\alpha }|=l-j-1+I_{B}(\alpha )\right\} .
\end{eqnarray*}
Therefore, by Lem. \ref{DD}
\begin{equation*}
|S(\alpha ,\beta )|=\mathfrak{D}_{|A|-I_{A}(\alpha
),\,|B|-I_{B}(\alpha )-I_{B}(\beta ),\,|C|-I_{C}(\beta
),\,k-1,\,l-k-1,\,k-i,\,l-j-I_{A}(\alpha )} .
\end{equation*}
Note that the sets under the $\bigcup $ sign in \eqref{union1} are pair-wise disjoint $\mathbb{
P}$-a.s. and each of them has probability $1/(n+r)!$ Therefore
\begin{equation*}
P(\alpha ,\beta ):=\mathbb{P}\left( \{X_{i:m}=X_{k:n+r}=X_{\alpha
},\,X_{j:n}^{(r)}=X_{l:n+r}=X_{\beta }\}\right) =\tfrac{|S(\alpha ,\beta )|}{
(n+r)!}.
\end{equation*}

There are four possible cases for the triplet $(A_{\alpha },B_{\alpha ,\beta
},C_{\beta })$:
\begin{equation*}
(A_{\alpha },B_{\alpha ,\beta },C_{\beta })=\left\{
\begin{array}{ll}
(A_{\alpha },B_{\beta },C), & \mbox{if}\;\alpha \in A,\;\beta \in B, \\
(A_{\alpha },B,C_{\beta }), & \mbox{if}\;\alpha \in A,\;\beta \in C, \\
(A,B_{\alpha ,\beta },C), & \mbox{if}\;\alpha ,\beta \in B, \\
(A,B_{\alpha },C_{\beta }), & \mbox{if}\;\alpha \in B,\;\beta \in C.
\end{array}%
\right.
\end{equation*}
That is, following \eqref{k<l} we obtain
\begin{eqnarray*}
&&\mathbb{P}\left( \{X_{i:m}=X_{k:n+r},\,X_{j:n}^{(r)}=X_{l:n+r}\right) \\
&=&\sum_{\substack{ \alpha \in A,  \\ \beta \in B}}\;P(\alpha ,\beta )+\sum
_{\substack{ \alpha \in A,  \\ \beta \in C}}\;P(\alpha ,\beta )+\sum_{\alpha
,\,\beta \in B}\;P(\alpha ,\beta )+\sum_{\substack{ \alpha \in B,  \\ \beta
\in C}}\;P(\alpha ,\beta ) \\
&=&\tfrac{|A||B||S(r,m)|+|A||C||S(r,n+r)|+|B|(|B|-1)|S(m-1,m)|+|B||C|
|S(m,n+r)|}{(r+n)!} \\
&=&\tfrac{|A||B|\mathfrak{D}_{|A|-1,\,|B|-1,\,|C|,\,k-1,\,l-k-1,\,k-i,\,l-j-1}
}{(r+n)!}+\tfrac{|A||C|\mathfrak{D}_{|A|-1,\,|B|,\,|C|-1,\,k-1,\,l-k-1,\,k-i,
\,l-j-1}}{(r+n)!} \\
&&+\tfrac{|B|(|B|-1)\mathfrak{D}_{|A|,\,|B|-2,\,|C|,\,k-1,\,l-k-1,\,k-i,\,l-j}
}{(r+n)!}+\tfrac{|B||C|\mathfrak{D}_{|A|,\,|B|-1,\,|C|-1,\,k-1,\,l-k-1,\,k-i,
\,l-j}}{(r+n)!}.
\end{eqnarray*}
Denote numerators in subsequent four fractions above by $I_{1}$, $I_{2}$, $
I_{3}$ and $I_{4}$, respectively. Note that
$$
I_{1}+I_{2}=|A|[|B|\mathfrak{D}_{|A|-1,\,|B|-1,\,|C|,\,k-1,\,l-k-1,\,k-i,
\,l-j-1} +|C|\mathfrak{D}_{|A|-1,\,|B|,\,|C|-1,\,k-1,\,l-k-1,\,k-i,\,l-j-1}] $$
$$=|A|\tfrac{(|A|+|B|+|C|-2)!}{\binom{|A|+|B|+|C|-2}{k-1,l-k-1}}\binom{|A|-1
}{l-j-1}\,\left[ \binom{|C|}{k-i}\sum_{m=1}^{l-j-1}\,\binom{l-j-1}{m}\,|B|
\binom{|B|-1}{i-m-1}\,\binom{|B|+|C|+m-k}{j+m-k}\right. $$
$$\left. +|C|\binom{|C|-1}{k-i}\sum_{m=1}^{l-j-1}\,\binom{
l-j-1}{m}\,\binom{|B|}{i-m-1}\,\binom{|B|+|C|+m-k}{j+m-k}\right] .
$$
We will use several times the following elementary identity
\begin{equation}
s\binom{s-1}{r}=(s-r)\binom{s}{r}.  \label{id}
\end{equation}

Applying \eqref{id} at the right hand side above we get
\begin{eqnarray*}
I_{1}+I_{2} &=&|A|\tfrac{(|A|+|B|+|C|-2)!}{\binom{|A|+|B|+|C|-2}{k-1,l-k-1}}
\binom{|A|-1}{l-j-1}\binom{|C|}{k-i} \\
&&\times \sum_{m=0}^{l-j-1}\,\binom{l-j-1}{m}\binom{|B|}{i-m-1}\,\binom{
|B|+|C|+m-k}{j+m-k}(|B|+1+m+|C|-k) \\
&=&|A|(|B|+|C|-j+1)\,\tfrac{(|A|+|B|+|C|-2)!}{\binom{|A|+|B|+|C|-2}{k-1,l-k-1
}}\binom{|A|-1}{l-j-1}\binom{|C|}{k-i} \\
&&\times \,\sum_{m=0}^{l-j-1}\,\binom{l-j-1}{m}\binom{|B|}{i-m-1}\binom{
|B|+|C|+m-k+1}{j+m-k} \\
&=&\tfrac{|A|(|B|+|C|-j+1)}{|A|+|B|+|C|-l+1}\,\mathfrak{D}_{|A|-1,|B|,|C|,k-1,l-k-1,k-i,l-j-1}.
\end{eqnarray*}
Similarly to $I_{1}+I_{2}$, we can also obtain a explicit form of $
I_{3}+I_{4} $. Then combining the expressions for $I_{1}+I_{2}$ and $
I_{3}+I_{4}$ we get the final formula in this case.

\textbf{(ii) }$\boldsymbol{k=l}$\textbf{.} Then
\begin{equation}
\label{k=l}
\{X_{i:m}=X_{k:n+r}=X_{j:n}^{(r)}\}=\bigcup_{\alpha\in B}\,\{X_{i:m}=X_{k:n+r}=X_{j:n}^{(r)}=X_{\alpha }\}=\bigcup_{\alpha\in B}\,\bigcup_{\sigma\in S(\alpha )}\,\{X_{\sigma(1)}\le \ldots\le X_{\sigma(n+r)}\},
\end{equation}
where
\begin{eqnarray*}
S(\alpha ) &=&\{\sigma \in \mathcal{S}(n+r):\,\sigma (k)=\alpha ,\,|\sigma
(\{1,\ldots ,k-1\})\cap (A\cup B_{\alpha })|=i-1, \\
&&\hspace{0.2cm}\,|\sigma (\{1,\ldots ,k-1\})\cap (B_{\alpha }\cup C)|=j-1\}
\\
&=&\{\sigma \in \mathcal{S}(n+r):\,\sigma (k)=\alpha ,\,|\sigma (\{1,\ldots
,k-1\})\cap C|=k-i, \\
&&\hspace{0.2cm}\,|\sigma (\{1,\ldots ,k-1\})\cap A|=k-j\}.
\end{eqnarray*}
By Lem. \ref{DD} it follows that
\begin{equation*}
|S(\alpha )|=\mathfrak{D}_{|A|,|B|-1,|C|,k-1,0,k-i,k-j}.
\end{equation*}
Since the right-hand side of \eqref{k=l} is the union of pair-wise disjoint sets having the same probability $1/(n+r)!$ we get immediately the final formula in this case.

\textbf{(iii) }$\boldsymbol{k>l}$\textbf{.} Note that in this case  \eqref{k<l} and \eqref{union1} remain formally valid however this time the set
 $S(\alpha, \beta)$ is different:
\begin{eqnarray*}
S(\alpha ,\beta ) &=&\{\sigma \in \mathcal{S}_{n+r}:\,\sigma (k)=\alpha
,\,\sigma (l)=\beta ,\,|\sigma (\{1,\ldots ,l-1\})\cap (B_{\alpha ,\beta
}\cup C_{\beta })|=j-1, \\
&&\hspace{0.2cm}\left\vert \sigma (\{1,\ldots ,k-1\}\setminus \{l\})\cap
(A_{\alpha }\cup B_{\alpha ,\beta })\right\vert =i-1-I_{B}(\beta )\}.
\end{eqnarray*}
Consequently,
\begin{eqnarray*}
S(\alpha ,\beta ) &=&\{\sigma \in \mathcal{S}_{n+r}:\,\sigma (k)=\alpha
,\,\sigma (l)=\beta ,\,|\sigma (\{1,\ldots ,l-1\})\cap A_{\alpha }|=l-j, \\
&&\hspace{0.2cm}\left\vert \sigma (\{1,\ldots ,k-1\}\setminus \{l\})\cap
C_{\beta })\right\vert =k-i-1+I_{B}(\beta )\}.
\end{eqnarray*}
Therefore, according to Lem. \ref{DD}
\begin{equation*}
|S(\alpha ,\beta )|=\mathfrak{D}_{|C_{\beta }|,\,|B_{\alpha ,\beta
}|,|A_{\alpha }|,l-1,k-l-1,l-j,k-i-I_{C}(\beta )}.
\end{equation*}
Thus, analogously as in \textbf{Case (i)} we obtain
\begin{eqnarray*}
&&\mathbb{P}\left(X_{i:m}=X_{k:n+r},\,X_{j:n}^{(r)}=X_{l:n+r}\right) \\
&=&|B||C|\tfrac{|S(m,n+r)|}{(r+n)!}+|A||C|\tfrac{|S(r,n+r)|}{(n+r)!}
+|B|(|B|-1)\tfrac{|S(m-1,m)|}{(n+r)!}+|A||B|\tfrac{|S(r,m)|}{(n+r)!} \\
&=&\tfrac{|B||C|\mathfrak{D}_{|C|-1,\,|B|-1,\,|A|,\,l-1,\,k-l-1,\,l -j,\,k-i-1}
}{(r+n)!}+\tfrac{|A||C|\mathfrak{D}_{|C|-1,\,|B|,\,|A|-1,\,j-1,\,k-l-1,\,l-j,
\,k-i-1}}{(r+n)!} \\
&&+\tfrac{|B|(|B|-1)\mathfrak{D}_{|C|,\,|B|-2,\,|A|,\,l-1,\,k-l-1,\,l-j,\,k-i}
}{(r+n)!}+\tfrac{|A||B|\mathfrak{D}_{|C|,\,|B|-1,\,|A|-1,\,l-1,\,k-l-1,\,l-j,
\,k-i}}{(r+n)!}.
\end{eqnarray*}
This formula is the analogue of the respective one from \textbf{Case (i)} with
the roles of $|A|$ vs. $|C|$, $k$ vs. $l$ and $i$ vs. $j$ being exchanged. The final
result follows again by combining first two and second two numerators above with the use of \eqref{id}, similarly as it was done in \textbf{Case (i)}.
\end{proof}

\vspace*{0.8cm}

\noindent\textbf{References}

\begin{enumerate}
	\item {\sc Ahsanullah, M., Nevzorov, V.B.}, Generalized spacings of order statistics from extended sample. {\em J. Statist. Plann. Infer.} {\bf 85} (2000), 75-83.

	\item \textsc{Akhundov, I.S., Balakrishnan, N., Nevzorov, V.B.}, New
	characterizations by properties of midrange and related statistics. \emph{Comm. Statist. Theory Meth.} \textbf{33(12)} (2004), 3133-3134.
	
	\item {\sc Akhundov, I., Nevzorov, V.}, From Student's $t_2$ distribution to Student's $t_3$ distribution through characterizations. {\em Comm. Statist. Sim. Comp.} {\bf 41(6)} (2012), 710-716.
	
    \item \textsc{Arnold, B. C., Balakrishnan, N., Nagaraja, H. N.}, {\em A First Course in Order Statistics}, Vol. 54, (2008), SIAM.
	
	\item \textsc{Balakrishnan, N., Akhundov, I.S.}, A characterization by
	linearity of the regression function based on order statistics. \emph{Statist. Probab. Lett.} \textbf{63} (2003), 435-440.
	
	\item {\sc Balakrishnan, N., Tan, T.}, A parametric test for trend based on moving order statistics, {\em J. Statist. Comp. Simul.} {\bf 86(4)} (2016), 641-655.
	
	\item \textsc{Bieniek, M., Maciag, K.} Uniqueness of characterization of absolutely continuous distributions by regressions of generalized order statistics. {\em Adv. Statist. Anal.} \textbf{102(3)}
(2018), 359-380.

	\item \textsc{David, H. A. and Nagaraja, H. N.} {\em Order statistics}, Wiley Series in Probability and Statistics, 3rd ed, (2003), John Wiley \& Sons.
	
	\item \textsc{Dembi\'nska, A., Weso\l owski, J.}, Linearity of regression
	for non-adjacent order statistics. \emph{Metrika} \textbf{48} (1998),
	215-222.
	
	\item \textsc{Do\l egowski, A., Weso\l owski, J.}, Linearity of regression
	for overlapping order statistics. \emph{Metrika} \textbf{78} (2015), 205-218.
	
	\item {\sc Ferguson, T.}, On a Rao-Shanbhag characterization of the exponential/geometric distribution, {\em Sankhya A} {\bf 64} (2002), 246-255.
	
	\item {\sc Galambos, J.} Characterizations. In: {\em Handbook of the Logistic Distribution} (N. Balakrishnan, ed.), Dekker, 1991, 169-188.
	
	\item {\sc Hu, C.Y., Lin, G.D.} Characterizations of the logistic and related distributions. {\em J. Math. Anal. Appl.} {\bf 463} (2018), 79-92.
	
	\item {\sc Inagaki, N.} The distribution of moving order statistics. In: {\em Recent Developments in Statistical Inference and Data Analysis} (K. Matsusita, ed.), North Holland, Amsterdam 1980, 137-142.
	
	\item {\sc Ishida, I., Kvedaras, V.} Modeling autoregressive processes with
	moving-quantiles-implied nonlinearity. {\em Econometrics} {\bf 3} (2015), 2-54.

    \item {\sc Jones, M. C.} The complementary beta distribution. {\em J. Statist. Plann. Infer.} \textbf{104(2)} (2002),329-337.

    \item {\sc Jones, M. C.} On a class of distributions defined by the relationship between their density and distribution functions.\emph{Comm. Statist. Theory Meth.} \textbf{36(10)} (2007), 1835-1843.
	
	\item \textsc{Kamps, U.}, A general recurrence relation for moments of order
	statistics in a class of probability distributions and characterizations.
	\emph{Metrika} \textbf{38} (1991), 215-225.
	
\item \textsc{L\'opez Bl\'aquez, F.;  Moreno Rebollo, J. L.}, A characterization of distributions based on linear regression
             of order statistics and record values.
             \emph{Sankhy\=a Ser. A} \textbf{59}(3) (1997), 311-323.


	\item \textsc{L\'{o}pez-Bl\'{a}zquez, F., Salamanca-Mi\~no, B.} Maximal correlation in a non-diagonal case. {\em J. Multivar. Anal.} {\bf 131} (2014), 265-278.
	
	\item {\sc Marudova, N.M., Nevzorov, V.B.}, A class of distributions that includes Student's $t_2$-distribution. {\em J. Math. Sci.} {\bf 159(3)} (2009), 312-316.
	
	\item {\sc Mohie El-Din, M.M., Amein, M.M., Hamedani, G.G.}, On order statistics for GS-distributions. {\em J. Statist. Theory Appl.} {\bf 11(3)} (2012), 237-264.
	
	\item {\sc Mui\~no, J.M., Voit, E.O., Sorribas, A.}, GS-distributions: A new family of distributions for continuous unimodal variables. {\em Comput. Statist. Data Anal.} {\bf 50} (2006), 2769-2798.
	
	\item \textsc{Nagaraja, H.N., Nevzorov, V.B.}, On characterizations based on
	records values and order statistics. \emph{J. Statist. Plann. Infer.}
	\textbf{63} (1997), 271-284.
	
	\item \textsc{Nevzorov, V.B.}, Ona property of Student's distribution with two degrees of freedom. {\em Zap. Nauch. Sem. POMI} {\bf 294} (2002), 148-157. (in Russian; English translation: \emph{J. Math. Sci.} \textbf{127(1)} (2005), 757-1762)
	
	\item {\sc Nevzorov, V.B.}, On some regression relations of connecting sample averages and order statistics. {\em Vest. Sankt Petersburg Gos. Univ.} {\bf 60(2)} (2015), 364-368. (in Russian)
	
	\item \textsc{Nevzorov, V.B., Balakrishnan, N., Ahsanullah, M.}, Simple
	characterizations of Student $\mathrm{t}_2$-distribution. \emph{JRSS D}
	\textbf{52(3)} (2003), 395-400.
	
	\item {\sc Nevzorova, L., Nevzorov, V.B., Akhundov, I.}, A simple characterization of Student's $t_2$ distribution. {\em Metron} {\bf 55} (2007), 53-57.
	
	
	\item {\sc Richards, F.J.}, A flexible growth function for empirical use. {\em J. Exp. Bot.} {\bf 10} (1959), 290-300.
	
	\item {\sc Siddiqui, M.M.}, Order statistics of a sample and of an extended sample. In: {\em Nonparametric Techniques in Statistical Inference} (M.L. Puri, ed.), Cambridge Univ. Press 1970, 417-423.
	
	\item {\sc Tryfos, P., Blackmore, R.}, Forecasting records. {\em JASA} {\bf 80} (1985), 46-50.
	
	\item \textsc{Weso\l owski, J., Gupta, A.K.}, Linearity of convex mean
	residual life time. \emph{J. Statist. Plan. Infer.} \textbf{99} (2001),
	183-191.
	
	\item \textsc{Yanev, G., Ahsanullah, M.}, Characterizations of Student's $
	\mathrm{t}$-distribution via regressions of order statistics. \emph{Statist.}
	\textbf{46(4)} (2012), 429-435.
\end{enumerate}

\end{document}